\theoremstyle{plain}
\newtheorem{thm}{Theorem}
\newtheorem{lemma}{Lemma}
\newtheorem{prop}{Proposition}
\theoremstyle{definition}
\newtheorem{defn}{Definition}
\theoremstyle{remark}
\newtheorem{rmk}{Remark}
\newtheorem{assumption}{Assumption}
\newenvironment{example}[1][0]
{ 
  \ifthenelse{\equal{#1}{0}}{
  \myexample
}
{ 
  \myexample
  \addtocounter{myexample}{-1}
}
}
{\endmyexample}
\newcommand{\virgolette}[1]{``#1''}
\newcommand{\R}{\mathbb{R}}{}
\newcommand{\N}{\mathbb{N}}
\newcommand{\E}{\mathbb{E}}
\newcommand{\bP}{\mathbb{P}}
\newcommand{\cA}{\mathcal{A}}
\newcommand{\cB}{\mathcal{B}}
\newcommand{\cD}{\mathcal{D}}
\newcommand{\cH}{\mathcal{H}}
\newcommand{\cL}{\mathcal{L}}
\newcommand{\cG}{\mathcal{G}}
\newcommand{\M}{\langle M \rangle}
\newcommand{\wi}{\hat {\imath}}
\newcommand{\wj}{\hat {\jmath}}
\newcommand{\cF}{\mathcal{F}}
\newcommand{\cS}{\mathcal{S}}
\newcommand{\cQ}{\mathcal{Q}}
\newcommand{\cV}{\mathcal{V}}
\newcommand{\cW}{\mathcal{W}}
\newcommand{\wt}{\widetilde}
\DeclareMathOperator{\en}{\text{\emph{end}}}
\DeclareMathOperator{\st}{\text{\emph{st}}}
\DeclareMathOperator{\lab}{\text{\emph{lab}}}
\DeclareMathOperator{\Pt}{\emph{Path}}
\DeclarePairedDelimiterX{\inp}[2]{\langle}{\rangle}{#1, #2}
\newcommand{\mat}[1]{{\color{black}#1}}
\author{Matteo Della Rossa \and Raph\"ael M. Jungers 
\thanks{
RJ is a FNRS honorary Research Associate. This project has received funding from the European Research Council (ERC) under the European Union's Horizon 2020 research and innovation programme under grant agreement No 864017 - L2C. RJ is also supported by the Innoviris Foundation and the FNRS (Chist-Era Druid-net). 
}
\thanks{M. Della Rossa and R. Jungers are with ICTEAM,
        UCLouvain, Louvain-la-Neuve, Belgium).
         {\tt\small matteo.dellarossa@uclouvain.be}}%
}
\title{Almost sure Stability of Stochastic Switched Systems: Graph lifts-based
Approach}
\begin{document}

\maketitle

\begin{abstract}
In this paper, we develop tools to establish almost sure stability of stochastic switched systems \mat{whose switching signal is constrained by an automaton}. After having provided the necessary generalizations of existing results in the setting of stochastic graphs, we provide a characterization of almost sure stability in terms of multiple Lyapunov functions. We introduce the concept of \emph{lifts}, providing formal expansions of stochastic graphs, together with the guarantee of conserving the underlying probability framework. We show how these techniques, firstly introduced in the deterministic setting, provide hierarchical methods in order to  compute tight upper bounds for the almost sure decay rate. The theoretical developments are finally illustrated via a numerical example.
\end{abstract}
\section{Introduction}
Switched linear systems provide a fruitful mathematical model for a large class of physical systems and have been the center of intense research in the last decades, for an overview, see~\cite{liberzon}. In this framework, given $M$ linear subdynamics $A_1,\dots, A_M\in \R^{n\times n}$, we consider the system
\begin{equation}\label{eq:SwitchedSysIntro}
x(k+1)=A_{\sigma(k)}x(k),\;\;\;\;k\in \N,
\end{equation}
where $\sigma:\N\to \{1,\dots, M\}$, the \emph{switching signal}, selects, at each instant of time, which subsystems the solution will follow.
 In some situations, we only have partial information on the reasonable/suitable signals $\sigma:\N\to \{1,\dots, M\}$, and thus the jumps among the subsystems are modeled as a \emph{stochastic process}, providing, at each instant of time, the \emph{probability} of having followed a particular switching policy. In this case, the arising stochastic system takes the name of \emph{discrete-time (Markov) jump linear system} or \emph{stochastic switched system}, and the earliest works studying stability in this setting can be found in~\cite{Ber60,KatsKras60,FurKes60}. The case of stochastic switching modeled by an i.i.d.  (independent and identically distributed) \mat{sequence} or by a Markov chain process is studied in~\cite{FangLop1995,CosFra93,Pro11,BolCol04,DaiHua08}. 

Since we consider stochasticity in system~\eqref{eq:SwitchedSysIntro}, several \emph{stability notions} can be defined and tackled. The case of \emph{worst case} stability is studied in~\cite{PhiEss16,LeeDull06}, providing a graph-constrained adaptation of the concept of \emph{joint spectral radius}~(JSR), see~\cite{Jung09} for a monograph.  The case of \emph{second moment stability}, (i.e. studying the convergence of the mean of the squared norm of solutions), is tackled in~\cite{FangLop1995,CosFra93,COsFraMar05}. In these works, it is proven that the second moment stability can be ensured, without conservatism, by semidefinite optimization, via a quadratic Lyapunov functions approach. The case of $q$-moment stability is tackled in~\cite{FangLop1995,JunPro11,FengLopJi92}, providing again a characterization in terms of a ``spectral quantity'' (\emph{$q$-radius}) and/or in terms of Lyapunov functions. 

In some situations, worst case or $q$-moment stability notions are restrictive or not meaningful for the considered systems; for that reason, recent research focused in studying \emph{almost sure stability}: system~\eqref{eq:SwitchedSysIntro} is said to be almost surely stable if the solutions are converging to zero, for almost all switching sequences (with respect to the underlying probability measure). This problem is studied for example in~\cite{Fang97,BolCol04,DaiHua08,Pro11,ProJun13,ChiMaz21}, and a common tool in this analysis is provided by the \emph{(maximal) Lyapunov exponent}, which provides a characterization of almost sure stability: indeed, the Lyapunov exponent can be seen, roughly speaking, as a probabilistic counterpart of the (logarithm of the) JSR. Unfortunately, it is well known that the problem of computing the Lyapunov exponent is NP-hard or even undecidable, see~\cite{TsiBlo97}.

In this manuscript, we provide novel techniques to estimate the Lyapunov exponent, proposing new sufficient conditions for the almost sure stability of~\eqref{eq:SwitchedSysIntro}. Our ideas rely on a \mat{language-theoretic} interpretation of Markov chains, and are inspired by \mat{techniques introduced in~\cite{PhiEss16} for the deterministic case.} We consider formal expansions of Markov chains, called \emph{lifts}, which, while not modifying the behavior of the stochastic system, simplify the task of providing upper bounds to the Lyapunov exponent. This requires to introduce a flexible stochastic model, generalizing the Markov chains framework, and we thus consider the \emph{stochastic graphs} formalism, see~\cite{LinMar95}. Then, our approach makes use of the concept of \emph{multiple Lyapunov functions}, introduced in~\cite{Pro10,Pro11} in the i.i.d. case, and adapted here in this general context. We prove that, increasing the dimension (in terms of node/edges) of the lift of the Markov chain, \mat{the corresponding estimation of the Lyapunov exponent is asymptotically exact.} 

The manuscript is organized as follows: in Section~\ref{sec:Prelim} we recall the necessary preliminaries of probability theory, while in Section~\ref{Sec:AlmostSUre} we provide the extension of results involving almost sure stability, the probabilistic spectral radius and multiple Lyapunov functions. In Section~\ref{sec:LIFt} we define the main concepts of our work, the \emph{lifts} of stochastic graphs, and we provide our main results, which are then illustrated in~Section~\ref{sec:NumExample} with the help of a numerical example.
\section{Preliminaries}\label{sec:Prelim}
In this section we introduce the necessary notation from  probability theory and stochastic switched systems. 
\subsection{Shift Space and Ergodicity}
Given $M\in \N$, consider the finite set $\M:=\{1,\dots, M\}$, and the \emph{one-sided Bernoulli space} defined by
$
\Sigma_M:=\{\sigma=(\sigma_0,\sigma_1,\sigma_2,\dots)\;\vert \;\forall k\in \N, \;\sigma_k\in \M\}$.
We consider the \emph{left-shift operator} $\ell:\Sigma_M\to \Sigma_M$ defined by $\ell(\sigma)=(\sigma_1,\sigma_2,\dots)$. 
 The \emph{Borel $\sigma$-algebra} of $\Sigma_M$, denoted by $\cB(\Sigma_M)$, is generated by the \emph{cylinders} of the form $[i_{k-1},\dots, i_{0}]:=\{\sigma\in \Sigma_M\;\vert\; \sigma_0=i_0,\,\dots\,,\sigma_{k-1}=i_{k-1}\}$, where $k\in \N$ is the length of the cylinder. There is an identification between cylinders of length $k$ and elements of $\M^k$ and thus we also denotes with $\wi=(i_{k-1},\dots, i_0)\in \M^k$ a generic cylinder $[\wi]$ of length $k$. A measure $\mu$ on $(\Sigma_M,\cB(\Sigma_M))$ is said to be \emph{shift-invariant} if $\mu(\ell^{-1}(C))=\mu(C)$ for all $C\in\cB(\Sigma_M)$. A shift-invariant measure $\mu$ is \emph{ergodic} with respect to $\ell$ if 
for all $C\in\cB(\Sigma_M)$ such that $\ell^{-1}(C)=C$ we have $(\mu(C)=0$  or $\mu(C)=1)$.
For an overview of this topic see~\cite[Chapter~1]{Walters2000}.

\subsection{Stochastic Graphs and Stochastic Switched Systems}
In the following, given $M\in \N$, we introduce the structure used to define probability measures on $(\Sigma_M, \cB(\Sigma_M))$.
\begin{defn}\label{defn:StochAsticGraph}
 Given $M\in \N$, a  \emph{stochastic graph} $\cG=(S,E,p)$ on $\M$ is defined by
 \begin{enumerate}[leftmargin=*]
  \item A finite set $S$, the set of nodes;
  \item The set $E\subset S\times S\times \M$ of directed, labeled edges;
  \item A function $p:E\to (0,1]$, where, given $e\in E$, $p(e)$ is the \emph{probability}  associated with $e\in E$. 
\end{enumerate}
 We denote the generic edge by $e=(a,b,i)$; $i=:\lab(e)\in \M$ is the \emph{label} of $e$, $a=:\st(e)\in S$ and $b=:\en(e)\in S$ are the \emph{starting} and \emph{ending} nodes of $e$, respectively. The probability $p(e)$, when needed, is also denoted by  $p_{a,b,i}$. We require that
\[
\sum_{b\in S, i\in \M}p_{a,b,i}=1,\;\;\;\forall a\in S.
\] 
\end{defn}
When needed for notational simplicity, we set $p_{a,b,i}=0$ for every $(a,b,i)\notin E$. For every $K\in \N$, by $\Pt^K(\cG)$ we denote the set of paths in $\cG$ of length $K$. Given $\overline q\in \Pt^K(\cG)$, $\overline q=(e_{j_1},\cdots, e_{j_K})$, we define $p(\overline q):=p(e_{j_1})\cdots p(e_{j_K})$. We denote by $\text{\emph{st}}(\overline q)\in S$ the \emph{starting node} of $\overline  q\in \Pt^K(\cG)$, and we say that $\wi=(i_{K-1},\dots, i_0)\in \M^K$ is the \emph{label} of $\overline q=(e_{j_1},\cdots, e_{j_K})\in \Pt^K(\cG)$ and we write $\text{\emph{lab}}(\overline q)=\wi$ if $\text{\emph{lab}}(e_{j_1})=i_0$, $\dots$,  $\text{\emph{lab}}(e_{j_K})=i_{K-1}$.

We define $\Xi_S$ the set of probability distributions on $S$, and we can identify $\Xi_S=\{ \xi\in \R^{|S|}_+\;\vert\;\sum_{j=1}^{|S|}\xi_j=1\}$. Given any $\xi \in \Xi_S$ and any stochastic graph $\cG$  on  $\M$,  we can define a probability measure on $(\Sigma_M,\cB(\Sigma_M))$ as clarified in what follows; for every $k\in \N$, we first consider a probability measure on $S \times \M^k$, denoted by $\bP_{\cG,\xi}$ (without making $k$ explicit, for simplicity) defined recursively as follows:
\begin{equation}\label{eq:ProbabilityDefns}
\begin{aligned}
\begin{cases}
\bP_{\cG,\xi}(s,\emptyset):=\xi(s),\;\;\;\forall\;s\in S,\\
\bP_{\cG,\xi}(s,i):=\sum_{a\in S}\xi(a)p_{a,s,i},\;\;\;\forall \;(s,i)\in S\times \M,\\
\bP_{\cG,\xi}(s,\wi):=\sum_{a\in S}\bP_{\cG,\xi}(a,\wi^-)p_{a,s,\wi_f},\;\forall s\in S,\;\wi\in \M^k
\end{cases}
\end{aligned}
\end{equation}
where, given $\wi=(i_{k-1},\dots, i_0)\in \M^k$, $\wi^-:=(i_{k-2},\dots, i_0)\in \M^{k-1}$ denotes the \emph{predecessor} of $\wi$ and $\wi_f:=i_{k-1}$ is the \emph{final label} of $\wi$. Intuitively, $\bP_{\cG,\xi}(s,\wi)$ denotes the probability of ``being'' in the node $s\in S$ after having followed a path labeled by the (multi-)index $\wi\in \M^k$, given an initial probability measure $\xi$.
Finally, we introduce $\mu_{\cG,\xi}$ on   $(\Sigma_M,\cB(\Sigma_M))$ by defining it on the set of cylinders of $\Sigma_M$, i.e., $\forall \;\wi\in \M^k$,  $\forall k\in \N$ we set
\begin{equation}\label{eq:ProbabilityDefnsPaths}
\mu_{\cG,\xi}(\wi):=\sum_{s\in S}\bP_{\cG,\xi}(s,\wi).
\end{equation}
Another possible definition of~\eqref{eq:ProbabilityDefnsPaths}, is obtained setting 
\begin{equation}\label{eq:DefProbabilityCountingPaths}
\mu_{\cG,\xi}(\wi):=\sum_{s\in S}\xi(s)\sum_{\substack{\overline q\in \Pt^k(\cG)\\\text{\emph{st}}(\overline q)=s,\, \text{\emph{lab}}(\overline q)=\wi }}p(\overline q),
\end{equation}
 for all $\wi\in \M^k$ and for all $k\in \N$.
It is easy to see that~\eqref{eq:ProbabilityDefnsPaths} and~\eqref{eq:DefProbabilityCountingPaths} are equivalent and in the following we may use both, depending on the convenience.
With this definition, for any stochastic graph $\cG$ and any $\xi\in \Xi_S$, we have that $(\Sigma_M,\cB(\Sigma_M),\mu_{\cG,\xi})$ is a \emph{well-defined probability space}.
\begin{rmk}[Choice of the model]
For more details regarding the stochastic graph formalism, see~\cite[Definition~2.3.14]{LinMar95} and references therein.
The case of finite Markov chain is recovered by the setting in Definition~\ref{defn:StochAsticGraph}; given a stochastic matrix $P=(p_{ij})\in \R^{M\times M}_{\geq 0}$, the state transition matrix associated to a time homogeneous Markov chain, we can define the corresponding stochastic graph by
$S:=\M$, $E=\{(i,j,j)\;\vert\;(i,j)\in \M^2\}$ and $p_{i,j,j}=p_{i,j}$, for all $(i,j)\in \M^2$.
For a graphical representation, see Figure~\ref{Fig:GraphH}. It can be seen that \emph{every} stochastic graph can be rewritten, paying the price of ``enlarging'' the alphabet and/or the node set, as an ordinary Markov chain, see~\cite{LinMar95}. We carry out the analysis in the setting of general stochastic graphs, since it will be crucial, in the following sections, when considering ``expanded'' version of a given graph/Markov chain. 

\end{rmk}
In the next developments, given a stochastic graph $\cG$, we consider the following crucial property.
\begin{assumption}\label{assum:StrongConn}
The considered stochastic graph $\cG$ is strongly connected, i.e., for all $a,b\in S$ there exists a path in $\cG$ starting at $a$ and arriving at $b$.
\end{assumption}
Given $\cG=(S,E,p)$ a stochastic graph on $\M$, let us consider the stochastic matrix $P_\cG\in \R^{|S|\times |S|}_{\geq 0}$ defined by
\begin{equation}\label{eq:DefnMatrixAssociatedtoG}
p_{a,b}:=\sum_{i\in \M}p_{a,b,i}.
\end{equation}
If $\cG$ satisfies Assumption~\ref{assum:StrongConn}, by the Perron-Frobenius Theorem, we can consider $\xi_\cG\in\Xi_S$ as the \emph{unique} measure such that $\xi_\cG^\top P_\cG=\xi_\cG^\top$, which satisfies ${\xi_\cG}_j>0$ for any $j\in \{1,\dots, |S|\}$. 
This unique measure $\xi_\cG$ is refereed to as the \emph{invariant measure} of the stochastic graph $\cG$. We recall the following important properties that we use in what follows.
\begin{lemma}\label{lemma:Ergodicity}
Consider a stochastic graph $\cG$ on $\M$ satisfying Assumption~\ref{assum:StrongConn}. Then, the measure $\mu_{\cG, \xi_\cG}$ on $(\Sigma_M,\cB(\Sigma_M))$ is shift-invariant and ergodic and, for any $\xi \in \Xi_S$, the measure $\mu_{\cG,\xi}$ is absolutely continuous\footnote{Given $2$ measures $\mu,\nu$ (on a generic measurable space), $\mu$ is absolutely continuous with respect to $\nu$ is $\nu(C)=0$ implies $\mu(C)=0$.} with respect to $\mu_{\cG, \xi_{\cG}}$. 
\end{lemma}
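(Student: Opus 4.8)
The plan is to establish the three claims — shift-invariance, ergodicity, and absolute continuity — in that order, since the first two concern $\mu_{\cG,\xi_\cG}$ specifically and the third reduces the general $\xi$ case to it. For \emph{shift-invariance}, it suffices to check the defining identity on cylinders, since they generate $\cB(\Sigma_M)$ and a measure is determined by its values there. Using~\eqref{eq:ProbabilityDefnsPaths}, one computes $\mu_{\cG,\xi_\cG}(\ell^{-1}([\wi]))$: the preimage of a cylinder $[\wi]$ with $\wi\in\M^k$ under the shift is the disjoint union $\bigcup_{j\in\M}[(\wi,j)]$ of cylinders of length $k+1$ (prepending $j$ as the new ``earliest'' symbol in the indexing convention $\wi=(i_{k-1},\dots,i_0)$, i.e.\ appending $j$ at the $i_0$-slot). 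The key algebraic step is that $\sum_{j\in\M}\bP_{\cG,\xi_\cG}(s,(\wi,j)) = \sum_{j\in\M}\sum_{a\in S}\bP_{\cG,\xi_\cG}(a,\wi)\,p_{a,s,j}$, and summing over $s\in S$ and using $\sum_{s\in S, j\in\M}p_{a,s,j}=1$ collapses this to $\sum_{a\in S}\bP_{\cG,\xi_\cG}(a,\wi)=\mu_{\cG,\xi_\cG}(\wi)$. Wait — one must be careful about \emph{which} end the shift removes versus which end the recursion~\eqref{eq:ProbabilityDefnsPaths} builds on; the honest version of this step uses that $\xi_\cG$ is invariant, $\xi_\cG^\top P_\cG=\xi_\cG^\top$, precisely to absorb the time-shift at the initial-distribution end. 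So the computation is: shifting removes the first emitted symbol $\sigma_0$, and invariance of $\xi_\cG$ under $P_\cG$ guarantees the resulting distribution on $S$ after one step is again $\xi_\cG$, making the cylinder probabilities match.

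For \emph{ergodicity}, the cleanest route is to invoke the standard characterization: a shift-invariant measure is ergodic iff it is mixing on cylinders in the Cesàro sense, i.e.\ $\frac1N\sum_{n=0}^{N-1}\mu(A\cap \ell^{-n}B)\to\mu(A)\mu(B)$ for all cylinders $A,B$. This reduces, via the path description~\eqref{eq:DefProbabilityCountingPaths}, to the convergence of Cesàro averages of $P_\cG^n$, which for a strongly connected (hence irreducible) finite stochastic matrix converges to the rank-one matrix $\mathbf{1}\xi_\cG^\top$ by Perron–Frobenius; periodicity is harmless because we take Cesàro averages. Concatenating a path realizing cylinder $A$ with an arbitrary $n$-step bridge and then a path realizing cylinder $B$, and averaging, factorizes into $\mu_{\cG,\xi_\cG}(A)\cdot\mu_{\cG,\xi_\cG}(B)$ in the limit because the bridge-distribution equilibrates to $\xi_\cG$. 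Alternatively one can cite~\cite[Chapter~1]{Walters2000} or~\cite{LinMar95} directly, since ergodicity of the Markov/sofic measure attached to an irreducible stochastic matrix is classical; I would present the Cesàro-mixing argument as the self-contained version and point to the literature as backup.

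For \emph{absolute continuity} of $\mu_{\cG,\xi}$ with respect to $\mu_{\cG,\xi_\cG}$: by strong connectivity, ${\xi_\cG}_s>0$ for every $s\in S$, so set $c:=\max_{s\in S}\xi(s)/{\xi_\cG}_s<\infty$. From~\eqref{eq:ProbabilityDefnsPaths} and the fact that $\bP_{\cG,\xi}(s,\wi)$ is, for fixed graph, a \emph{linear} function of the initial distribution — explicitly $\bP_{\cG,\xi}(s,\wi)=\sum_{a\in S}\xi(a)\,Q^{\wi}_{a,s}$ for nonnegative coefficients $Q^{\wi}_{a,s}=\sum_{\overline q:\,\st(\overline q)=a,\,\lab(\overline q)=\wi}p(\overline q)$ independent of $\xi$ — we get $\mu_{\cG,\xi}(\wi)=\sum_{a,s}\xi(a)Q^{\wi}_{a,s}\le c\sum_{a,s}{\xi_\cG}_a Q^{\wi}_{a,s}=c\,\mu_{\cG,\xi_\cG}(\wi)$ for every cylinder $\wi$. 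Since cylinders form a $\pi$-system generating $\cB(\Sigma_M)$ and both sides are measures, the inequality $\mu_{\cG,\xi}\le c\,\mu_{\cG,\xi_\cG}$ extends to all Borel sets by a monotone-class / Dynkin argument, and in particular $\mu_{\cG,\xi_\cG}(C)=0\Rightarrow\mu_{\cG,\xi}(C)=0$. The main obstacle, and the part requiring genuine care rather than routine bookkeeping, is the ergodicity claim: handling possible periodicity of $P_\cG$ correctly (hence the Cesàro averaging) and making the ``equilibration of the bridge distribution to $\xi_\cG$'' rigorous is where a naive argument would slip; shift-invariance and absolute continuity are essentially linear-algebra bookkeeping against $\xi_\cG^\top P_\cG=\xi_\cG^\top$ and ${\xi_\cG}>0$.
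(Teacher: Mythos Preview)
Your proposal is correct and aligns with the paper's own treatment, which is not a proof but a two-sentence sketch: it cites~\cite[Chapter~1]{Walters2000} for shift-invariance and ergodicity, and for absolute continuity simply notes that $\xi_\cG>0$ componentwise. Your plan fills in precisely these details---checking shift-invariance on cylinders via $\xi_\cG^\top P_\cG=\xi_\cG^\top$, establishing ergodicity through Ces\`aro mixing and the Perron--Frobenius convergence of $\tfrac{1}{N}\sum_{n<N}P_\cG^n\to\mathbf{1}\xi_\cG^\top$ (correctly handling possible periodicity), and deriving absolute continuity from the uniform cylinder bound $\mu_{\cG,\xi}\le c\,\mu_{\cG,\xi_\cG}$ with $c=\max_s \xi(s)/\xi_\cG(s)$ extended by a monotone-class argument---so the route is the same, only more explicit.

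One stylistic remark: your shift-invariance paragraph contains a mid-argument self-correction (``Wait---one must be careful\dots''). The first displayed identity there indeed applies the recursion~\eqref{eq:ProbabilityDefns} at the wrong end of the word (that recursion peels off $\wi_f$, not the newly prepended initial symbol $j$), and the corrected version you give afterwards---absorbing the extra initial step via $\xi_\cG^\top P_\cG=\xi_\cG^\top$, most transparently through the path formula~\eqref{eq:DefProbabilityCountingPaths}---is the right one. In a final write-up, drop the false start and present only the corrected computation.
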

The first part holds by ergodic theory, see for example~\cite[Chapter 1]{Walters2000}, while the absolute continuity of $\mu_{\cG,\xi}$ for any $\xi \in \Xi_S$ follows by the fact that $\xi_{\cG}>0$ (component-wise). Now that the stochastic setting is well defined, we introduce the class of systems we study in what follows. 
\begin{defn}[(Stochastic) Switched Systems]\label{defn:StochasticJumpLinearSystems}
Let us consider $M, n\in \N$, $\cA:=\{A_1,\dots,A_M\}\subset \R^{n\times n}$, and a stochastic graph $\cG$ on $\M$. We consider the \emph{discrete-time switched system} $\cS(\cA,\cG)$ defined by
\begin{equation}\label{eq:StocSysDefn}
x(k+1)=A_{\sigma_k} \,x(k),\;\;\;k\in \N,
\end{equation}
where $\sigma\in \Sigma_M$ is also called the \emph{switching sequence}. Given $\xi\in \Xi_S$, we consider the probability space $(\Sigma_M,\cB(\Sigma_M),\mu_{\cG,\xi})$ and the asymptotic behavior of systems~\eqref{eq:StocSysDefn} is then \emph{studied with respect to this measure}.

\end{defn}
Given $x_0\in \R^n$ and $\sigma\in \Sigma_M$, we denote by $x(k,x_0,\sigma)$ the \emph{solution} of~\eqref{eq:StocSysDefn}, starting at $x_0$ with respect to the signal $\sigma$, evaluated at time $k\in \N$. Similarly, given $\cA=\{A_1,\dots, A_M\}\subset \R^{n\times n}$, for any $k\in \N$, given $\wi=(i_{k-1},\dots, i_0)\in \M^k$ we use the notation $A(\wi)=A_{i_{k-1}}\cdot\cdot\cdot A_{i_0}$, and given $\sigma\in \Sigma_M$, $A^k(\sigma)=A_{\sigma_{k-1}}\cdot\cdot\cdot A_{\sigma_0}$.

\section{Almost Sure Stability and probabilistic spectral radius}\label{Sec:AlmostSUre}

We now introduce the considered stability notion and the corresponding spectral characterization.
\begin{defn}
Consider $M,n\in \N$, $\cA=\{A_1,\dots, A_M\}\subset \R^{n\times n}$ and a stochastic graph $\cG$ on $\M$. Given  $\Phi\subseteq \Xi_S$ the switched  system~\eqref{eq:StocSysDefn} is said to be \emph{uniformly almost surely asymptotically stable with respect to $\Phi$}  if, for any $x_0\in \R^n$ and any $\xi\in \Phi$ we have
\begin{equation}
\mu_{\cG,\xi}\left(\left\{\sigma\in \Sigma_M\;\vert\;\;\lim_{k\to \infty}|x(k,x_0,\sigma)|=0\right\}\right)=1.
\end{equation}
In the case  $\Phi=\Xi_S$, the term \emph{\virgolette{with respect to $\Phi$}} is omitted. 
 \end{defn}
 It turns out that this stability notion, (as the ``deterministic'' one, introduced in~\cite{Pro11}) can be characterized by studying a corresponding probabilistic spectral radius.

\begin{defn}[\mat{Probabilistic} Spectral Radius]
Consider $M,n\in \N$, $\cA=\{A_1,\dots, A_M\}\subset \R^{n\times n}$, a stochastic graph $\cG$ on $\M$  and $\xi\in \Xi_S$.
Given any operator matrix norm $\|\cdot\|$, we define
\begin{equation}
\begin{aligned}
\rho_0(\cA,\cG,\xi):=\limsup_{k\to \infty}\left[\prod_{\wi \in \M^k} \|A(\wi)\|^{\mu_{\cG,\xi}(\wi)\,}\right]^\frac{1}{k}.
\end{aligned}\label{eq:AlmostSureJSR}
\end{equation}
The quantity  $\rho_0(\cA,\cG,\xi)$ is referred to as the \emph{probabilistic spectral radius of $\cA$ induced by $\cG$ and $\xi\in \Xi_S$}.
\end{defn}
\mat{The subscript $0$ in $\rho_0(\cA,\cG,\xi)$ is motivated by the fact that this radius can be seen as the limit, for $q$ going to $0$, of the \emph{$q$-spectral radius}, i.e. the quantity characterizing the stability of the $q$-moment of solutions, see~\cite{Pro08,FangLop1995}.}
From now on we study stability with respect to the whole set $\Xi_S$; under Assumption~\ref{assum:StrongConn} this is not restrictive: using Lemma~\ref{lemma:Ergodicity} it can be shown that, for any $\xi \in \Xi_S$, we have 
\begin{equation}\label{eq:SupremumProbability}
\sup_{\xi\in \Xi_S}\rho_0(\cA,\cG,\xi)=\rho_0(\cA,\cG,\xi_\cG),
\end{equation}
see \cite[Lemma 2.3]{FangLop1995}. We have the following relation between almost sure stability and probabilistic spectral radius.
\begin{prop}
Consider $M,n\in \N$, $\cA=\{A_1,\dots, A_M\}\subset \R^{n\times n}$ and a stochastic graph $\cG$ on $\M$ satisfying Assumption~\ref{assum:StrongConn}. Then, the system $\cS(\cA,\cG)$ is uniformly almost surely asymptotically stable if and only if
\[
\rho_0(\cA,\cG,\xi_\cG)<1.
\] 
\end{prop}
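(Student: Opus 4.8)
The plan is to recast the statement in terms of the maximal Lyapunov exponent of $\cA$ along $\mu_{\cG,\xi_\cG}$, which I will identify with $\log\rho_0(\cA,\cG,\xi_\cG)$. Put $a_k:=\sum_{\wi\in\M^k}\mu_{\cG,\xi_\cG}(\wi)\log\|A(\wi)\|=\E_{\mu_{\cG,\xi_\cG}}[\log\|A^k(\sigma)\|]$. Since $A^{k+m}(\sigma)=A^{k}(\ell^{m}\sigma)A^{m}(\sigma)$, $\|\cdot\|$ is submultiplicative, and $\mu_{\cG,\xi_\cG}$ is shift-invariant (Lemma~\ref{lemma:Ergodicity}), integrating $\log\|A^{k+m}(\sigma)\|\le\log\|A^k(\ell^m\sigma)\|+\log\|A^m(\sigma)\|$ shows $(a_k)$ is subadditive; hence $a_k/k\to\inf_k a_k/k=:\lambda$, so the $\limsup$ in~\eqref{eq:AlmostSureJSR} is a genuine limit and $\log\rho_0(\cA,\cG,\xi_\cG)=\lambda$. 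Furthermore $(\sigma,k)\mapsto\log\|A^{k}(\sigma)\|$ is a subadditive cocycle over the measure-preserving, ergodic system $(\Sigma_M,\cB(\Sigma_M),\mu_{\cG,\xi_\cG},\ell)$ (Lemma~\ref{lemma:Ergodicity}) whose positive part is bounded above by $\max_i(\log\|A_i\|)^{+}<\infty$; Kingman's subadditive ergodic theorem then gives $\tfrac1k\log\|A^{k}(\sigma)\|\to\lambda$ for $\mu_{\cG,\xi_\cG}$-a.e.\ $\sigma$.

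\emph{Sufficiency.} Assume $\rho_0(\cA,\cG,\xi_\cG)<1$ and fix $\beta\in(\lambda,0)$. For $\mu_{\cG,\xi_\cG}$-a.e.\ $\sigma$ there is $K(\sigma)\in\N$ with $\|A^{k}(\sigma)\|\le e^{\beta k}$ for all $k\ge K(\sigma)$, whence $|x(k,x_0,\sigma)|\le e^{\beta k}|x_0|\to 0$ for every $x_0\in\R^n$. Thus the set of $\sigma$ along which all solutions converge to $0$ has full $\mu_{\cG,\xi_\cG}$-measure, and by the absolute continuity of $\mu_{\cG,\xi}$ with respect to $\mu_{\cG,\xi_\cG}$ for every $\xi\in\Xi_S$ (Lemma~\ref{lemma:Ergodicity}) it has full $\mu_{\cG,\xi}$-measure as well; this is uniform almost sure asymptotic stability.

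\emph{Necessity.} I prove the contrapositive: if $\rho_0(\cA,\cG,\xi_\cG)\ge 1$, i.e.\ $\lambda\ge0$, then $\cS(\cA,\cG)$ is not uniformly almost surely asymptotically stable. Fix the standard basis $e_1,\dots,e_n$ and a constant $c_n$ with $\|B\|\le c_n\max_{1\le j\le n}|Be_j|$ for all $B\in\R^{n\times n}$ (equivalence of norms). From $\log\|A^k(\sigma)\|\le\log c_n+\log\max_j|A^k(\sigma)e_j|$, dividing by $k$ and letting $k\to\infty$ (the left side converging to $\lambda$), one obtains $\max_j\limsup_k\tfrac1k\log|A^k(\sigma)e_j|\ge\lambda$ for $\mu_{\cG,\xi_\cG}$-a.e.\ $\sigma$. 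Hence the sets $\Omega_j:=\{\sigma:\limsup_k\tfrac1k\log|A^k(\sigma)e_j|\ge\lambda\}$ cover $\Sigma_M$ up to a null set, and $\mu_{\cG,\xi_\cG}(\Omega_{j_0})>0$ for some $j_0$. If $\lambda>0$, then $x_0:=e_{j_0}$ gives $|x(k,x_0,\sigma)|=|A^k(\sigma)e_{j_0}|\not\to0$ on $\Omega_{j_0}$, so $\mu_{\cG,\xi_\cG}(\{\sigma:|x(k,x_0,\sigma)|\to0\})\le 1-\mu_{\cG,\xi_\cG}(\Omega_{j_0})<1$, and we conclude with $\xi=\xi_\cG$.

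The case $\lambda=0$ (equivalently $\rho_0(\cA,\cG,\xi_\cG)=1$) is where I expect the real difficulty to lie, since a vanishing exponential rate does not preclude $|x(k,x_0,\sigma)|\to0$ at a sub-exponential speed. The plan there is: by ergodicity the event $\{\sup_k\|A^k(\sigma)\|<\infty\}$ — which contains its $\ell$-preimage, by $\|A^{k}(\sigma)\|\le(\max_i\|A_i\|)\|A^{k-1}(\ell\sigma)\|$, hence coincides up to a null set with an $\ell$-invariant set — has $\mu_{\cG,\xi_\cG}$-measure $0$ or $1$, and likewise $\{\sup_k\|A^k(\sigma)\|=\infty\}$. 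In the latter event (measure $1$), the pigeonhole over $\{e_1,\dots,e_n\}$, now applied along subsequences where $\|A^k(\sigma)\|\to\infty$, again produces a fixed $x_0$ with $|x(k,x_0,\cdot)|$ unbounded on a positive-measure set, so the system is unstable. If instead $\sup_k\|A^k(\sigma)\|<\infty$ a.s., one must exclude $\|A^k(\sigma)\|\to0$ a.s.: here $\lambda=0$ forces $a_k\ge0$ for every $k$, while $\|A^k(\sigma)\|\to0$ a.s., combined with the fact that for each fixed $k$ the variable $\log\|A^k(\sigma)\|$ takes only finitely many (finite) values, should lead — through a reverse-Fatou/recurrence argument — to a contradiction; this is exactly where the finiteness of $\cA$ and the strong connectedness of $\cG$ are genuinely needed, in the spirit of the deterministic treatment in~\cite{Pro11}. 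Finally, the statement that $\|A^k(\sigma)\|\not\to0$ on a positive-measure set is upgraded to the existence of a single destabilizing $x_0$ by a compactness-and-pigeonhole argument over directions of the unit sphere. I therefore expect the bulk of the work, and the main obstacle, to be in this critical case $\rho_0(\cA,\cG,\xi_\cG)=1$.
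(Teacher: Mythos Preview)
Your approach is precisely the one the paper alludes to: the paper's own ``proof'' is only a sketch that defers to~\cite{FangLop1995} after observing $e^{\lambda_0(\cA,\cG,\xi)}=\rho_0(\cA,\cG,\xi)$, and you are effectively unpacking that reference via Kingman's subadditive ergodic theorem applied to the cocycle $\log\|A^k(\sigma)\|$ over the ergodic system $(\Sigma_M,\mu_{\cG,\xi_\cG},\ell)$. Your identification $\log\rho_0=\lambda=\inf_k a_k/k$, your sufficiency argument, and your necessity argument in the strict case $\lambda>0$ (pigeonhole on the basis vectors, then absolute continuity from Lemma~\ref{lemma:Ergodicity}) are all correct and standard.

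You are also right that the case $\lambda=0$ is where the real content lies, and honest in flagging your treatment there as incomplete. Two concrete obstructions to your sketch as written: first, nothing in the setting excludes singular products, so $\log\|A^k(\sigma)\|$ need not take only \emph{finite} values, and the very definition of $a_k$ already requires care; second, a ``reverse Fatou'' argument needs an integrable majorant independent of $k$, and neither $k\max_i\log\|A_i\|$ nor $\log C(\sigma)$ (with $C(\sigma)=\sup_k\|A^k(\sigma)\|$) is known to serve---boundedness a.s.\ does not give integrability of $\log C$. The arguments in~\cite{FangLop1995,Fang97} close this gap differently: one shows, using the finite-state Markov structure and strong connectedness, that almost sure convergence $\|A^k(\sigma)\|\to0$ forces an \emph{exponential} rate (hence $\lambda<0$), typically via an Egorov/renewal step---on a set of positive measure one has $\|A^{N}(\sigma)\|\le\tfrac12$ for some fixed $N$, and recurrence to that set along the chain yields geometric decay. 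Your compactness-and-pigeonhole upgrade from ``$\|A^k(\sigma)\|\not\to0$'' to a single destabilizing $x_0$ is the right final move once that step is in place. In short: same route as the paper, correctly executed where you execute it, with the acknowledged gap at $\lambda=0$ being a genuine one that your outlined tools do not yet bridge.
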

\begin{proof}[Sketch of the Proof]
The proof can be found in~\cite{FangLop1995} for the case of Markov jump linear systems (i.e. for stochastic graphs arising from strongly connected Markov chains), and the ideas can be adapted in this context, \emph{mutatis mutandis}. The peculiarity here is that we consider the probabilistic spectral radius as defined in~\eqref{eq:AlmostSureJSR}. Instead, most of the literature concerning stability of stochastic switched systems (cfr.~\cite{FangLop1995,Fang97,ProJun13,SutFawRen21} and references therein) introduce the \emph{(maximal) Lyapunov Exponent}  
\begin{equation}\label{eq:LyapExp}
\lambda_0(\cA,\cG,\xi):=\limsup_{k\to \infty}\frac{1}{k}\E_{\cG,\xi}\left(\log \,\|A^k(\sigma)\|\right)
\end{equation}
to characterize almost sure stability. Since it holds that $e^{\lambda_0(\cA,\cG,\xi)}=\rho_0(\cA,\cG,\xi)$, we can apply the same arguments in our case. 
\end{proof}
\mat{The reason to consider the probabilistic spectral radius (instead of the Lyapunov exponent) is two-fold: from one side, this allows  to draw a parallel with the definition and estimation techniques for the JSR (see~\cite{ChiMaz21}), and, on the other hand, it allows us to simplify the following notation and proofs.}
\subsection{Lyapunov Multi Functions for Almost-Sure Stability}

To give a \virgolette{computable} characterization of the probabilistic spectral radius, we define the space of candidate Lyapunov functions.
\begin{defn}
We say that $f$ is a \emph{candidate Lyapunov function} (and we write $f\in \cF_n$) if 
 $f:\R^n\to \R$ is continuous, positive definite, and positively homogeneous\footnote{A function $f:\R^n\to \R$ is positively homogeneous if $f(ax)=af(x)$, for any $a\in \R_+$ and any $x\in \R^n$.}.
\end{defn}
\begin{defn}[Lyapunov Multi-Functions]
Consider $\cA=\{A_1,\dots, A_N\}\subset \R^{n\times n}$, a stochastic graph $\cG=(S,E,p)$ and a scalar $\rho\geq 0$. A \emph{Lyapunov multi function (LMF) for $\cS(\cA,\cG)$ w.r.t. $\rho$} is a set of $|S|$ functions $F:=\{f_a\in \cF_n\,\;\;\vert\; a\in S\}$ such that, $\forall x\in \R^n, \forall a\in S$, 
\begin{equation}\label{eq:DefinitionLyapunovFunctionals}
\prod_{i\in \M}\prod_{b\in S}(f_b(A_ix))^{p_{a,b,i}}\leq\rho f_a(x).
\end{equation}
\end{defn}
\begin{prop}\label{prop:SpectralRadiiCharachterization}
Consider $\cA=\{A_1,\dots, A_N\}\subset \R^{n\times n}$, a stochastic graph $\cG$, it holds that
\begin{equation}\label{eq:InfimumJSRInequality}
\begin{aligned}
&\sup_{\xi\in \Xi_S}\rho_0(\cA,\cG,\xi)\leq\\&\inf\left\{\rho\geq 0\,\vert\,\exists\;F\subset\cF_n, \,\text{LMF for $\cS(\cA,\cG)$ w.r.t. $\rho$}\right\}.
\end{aligned}
\end{equation}
If $\cG$ satisfies Assumption~\ref{assum:StrongConn} the equality holds, i.e.,
\begin{equation}\label{eq:InfimumJSREquality}
\begin{aligned}
&\rho_0(\cA,\cG,\xi_\cG)=\\&\inf\left\{\rho\geq 0\,\vert\,\exists\;F\subset\cF_n, \,\text{LMF for $\cS(\cA,\cG)$ w.r.t. $\rho$}\right\}.
\end{aligned}
\end{equation}
\end{prop}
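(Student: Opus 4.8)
The plan is to prove the two directions of Proposition~\ref{prop:SpectralRadiiCharachterization} separately, the inequality~\eqref{eq:InfimumJSRInequality} holding in full generality and the reverse inequality (hence~\eqref{eq:InfimumJSREquality}) under Assumption~\ref{assum:StrongConn}. For the ``easy'' direction~\eqref{eq:InfimumJSRInequality}, I would fix any LMF $F=\{f_a\}$ for $\cS(\cA,\cG)$ w.r.t. $\rho$ and any $\xi\in\Xi_S$, and show $\rho_0(\cA,\cG,\xi)\le\rho$. The idea is to iterate~\eqref{eq:DefinitionLyapunovFunctionals} along paths: using the path-counting formula~\eqref{eq:DefProbabilityCountingPaths} and the multiplicativity $p(\overline q)=p(e_{j_1})\cdots p(e_{j_K})$, one shows by induction on $k$ that $\prod_{\wi\in\M^k}\prod_{b\in S}(f_b(A(\wi)x))^{\bP_{\cG,\xi}(b,\wi)}\le \rho^k\,\prod_{a\in S}(f_a(x))^{\xi(a)}$ — each application of~\eqref{eq:DefinitionLyapunovFunctionals} is along one more edge, and the exponents combine exactly because the weights are products of edge probabilities summed over paths. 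Summing (via concavity of $\log$, i.e. the weighted AM–GM inequality, or directly) over $b$ and recalling $\sum_b \bP_{\cG,\xi}(b,\wi)=\mu_{\cG,\xi}(\wi)$, together with the norm-equivalence bounds $c_1|x|\le f_a(x)\le c_2|x|$ valid on $\cF_n$ by continuity, positive definiteness and homogeneity, gives $\prod_{\wi\in\M^k}\|A(\wi)\|^{\mu_{\cG,\xi}(\wi)}\le C\rho^k$ for a constant $C$ independent of $k$; taking $k$-th roots and $\limsup$ yields $\rho_0(\cA,\cG,\xi)\le\rho$, and then taking the infimum over $\rho$ and the supremum over $\xi$ gives~\eqref{eq:InfimumJSRInequality}.

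For the reverse inequality under Assumption~\ref{assum:StrongConn}, I would construct, for each $\rho>\rho_0(\cA,\cG,\xi_\cG)$, an explicit LMF w.r.t. $\rho$ (or w.r.t. $\rho$ up to an arbitrarily small slack, which suffices after letting the slack go to zero). The natural candidate, mirroring the deterministic JSR theory, is to define for each node $a\in S$ a function built from finite products of norms of matrix products along paths \emph{emanating} from $a$, geometrically discounted by $\rho$. Concretely, set $f_a(x):=\limsup_{T\to\infty}\big(\prod_{t=0}^{T}\ \rho^{-t}\ \prod_{\overline q\in\Pt^t(\cG),\,\st(\overline q)=a}\|A(\lab(\overline q))x\|^{\,p(\overline q)}\big)^{1/(T+1)}$, or a suitably normalized Cesàro/geometric average thereof, designed so that the one-step sub-Markov relation~\eqref{eq:DefinitionLyapunovFunctionals} holds by construction (peeling off the first edge). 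One must check three things: that $f_a$ is finite and positive definite — finiteness from $\rho>\rho_0$ and the $\limsup$ definition of $\rho_0$ together with strong connectivity (which lets one relate path-averages started at $a$ to those controlled by the invariant measure $\xi_\cG$), positivity from sub-multiplicativity of norms applied to $A(\wi)^{-1}$-type lower bounds, i.e. $\|A(\wi)x\|\ge \|A(\wi)^{-1}\|^{-1}|x|$ when the $A_i$ are invertible, and by a perturbation/regularization argument (replacing $A_i$ by $A_i+\varepsilon I$ or working with $\max$ of the candidate and $\delta|x|$) in general; that $f_a$ is continuous and positively homogeneous, which is immediate from the homogeneity of each $\|A(\wi)\cdot\|$ and uniform local boundedness; and that~\eqref{eq:DefinitionLyapunovFunctionals} holds, which should reduce to reorganizing the product over length-$(t{+}1)$ paths from $a$ as a product over the first edge $(a,b,i)$ times a product over length-$t$ paths from $b$, exactly matching the left-hand side of~\eqref{eq:DefinitionLyapunovFunctionals} with weights $p_{a,b,i}$.

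The main obstacle I anticipate is twofold and concentrated in the second direction: first, making the infinite-product/limsup definition of $f_a$ \emph{converge} and be genuinely positive definite — this is where strong connectivity and the characterization~\eqref{eq:SupremumProbability} of $\rho_0$ via $\xi_\cG$ are essential, because one needs that long paths from \emph{any} fixed start $a$ have geometric growth governed by the same $\rho_0$, not merely the $\xi_\cG$-averaged one, and that requires a combinatorial argument bridging finitely many ``transient'' steps (bounded by the diameter of $\cG$) to the stationary regime; second, the positive-definiteness lower bound $f_a(x)\ge c|x|$ genuinely fails if some $A_i$ is singular with a nontrivial common kernel direction, so the clean statement presumably uses that $\cF_n$ only requires positive definiteness of $f_a$ itself (not a uniform lower bound) and handles degeneracy by an approximation argument — I would isolate this in a lemma showing that an $\varepsilon$-LMF exists for every $\varepsilon>0$ and that the infimum in~\eqref{eq:InfimumJSREquality} is therefore $\le\rho_0(\cA,\cG,\xi_\cG)$, closing the gap between the two inequalities. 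A cleaner alternative for the reverse direction, which I would fall back on if the explicit construction gets unwieldy, is to invoke the equality case from the deterministic/Markov-chain literature after reducing a general stochastic graph to a Markov chain (as the Remark after Definition~\ref{defn:StochAsticGraph} indicates is always possible) and checking that the LMF on the enlarged chain pushes back to an LMF on $\cG$; but I expect the direct path-product construction to be the intended and more self-contained argument.
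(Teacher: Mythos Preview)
Your plan for the inequality~\eqref{eq:InfimumJSRInequality} is exactly the paper's proof: the paper fixes an LMF $F=\{f_s\}_{s\in S}$ and $\xi\in\Xi_S$, records the sandwich bounds $\alpha|x|\le f_s(x)\le\beta|x|$ coming from homogeneity and positive definiteness, proves by induction (unrolling the recursion~\eqref{eq:ProbabilityDefns} for $\bP_{\cG,\xi}$ and applying~\eqref{eq:DefinitionLyapunovFunctionals} once at the last step) precisely your inequality
\[
\prod_{s\in S}\prod_{\wj\in\M^k}f_s(A(\wj)x)^{\bP_{\cG,\xi}(s,\wj)}\le\rho^k\prod_{s\in S}f_s(x)^{\xi(s)},
\]
and then combines this with the sandwich bounds and $\sum_s\bP_{\cG,\xi}(s,\wj)=\mu_{\cG,\xi}(\wj)$ to get $\prod_{\wj}\|A(\wj)\|^{\mu_{\cG,\xi}(\wj)}\le(\beta/\alpha)\rho^k$, whence $\rho_0(\cA,\cG,\xi)\le\rho$ after taking $k$-th roots and $\limsup$.

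For the reverse inequality~\eqref{eq:InfimumJSREquality} the paper does \emph{not} carry out any construction: it simply says that the argument of~\cite{Pro10} for i.i.d.\ matrices ``applies in the ergodic case (i.e.\ under Assumption~\ref{assum:StrongConn} and applying~\eqref{eq:SupremumProbability}), \emph{mutatis mutandis}''. Your explicit path-product candidate $f_a$ and the obstacles you flag (finiteness via strong connectivity bridging any start node to the stationary regime, positive definiteness via regularization in the singular case) are therefore considerably more detailed than what the paper writes down; they are in the spirit of the cited reference, and your ``cleaner alternative'' of reducing to the known Markov-chain/i.i.d.\ result is, in effect, precisely the route the paper takes. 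So: same approach for the upper bound, and for the lower bound you are attempting to spell out what the paper leaves to a citation.
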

\begin{proof}
First, consider a Lyapunov multi function for $\cS=(\cA,\cG)$ w.r.t. $\rho$, denoted by $F:=\{f_s\in \cF_n,\vert\;s\in S\}$. We show that for every $\xi\in \Xi_S$, we have $\rho_0(\cA,\cG,\xi)\leq \rho$.
 By finiteness of $S$ and since $f_s\in \cF_n$ are homogeneous of degree~$1$, continuous and positive definite, there exist $\alpha,\beta\in \R_+$ such that
 \begin{equation}\label{eq:SandwichInequality}
 \alpha |x|\leq f_s(x)\leq \beta|x|\;\;\;\forall x\in \R^n,\;\;\forall s\in S.
 \end{equation} 
Let us consider any $\xi\in \Xi_S$, we want to prove that, for every $k\in \N$,
\begin{equation}\label{eq:MainInequalityProof2}
\prod_{s\in S}\prod_{\wj\in \M^k}f_s(A(\wj)x)^{\bP_{\cG,\xi}(s,\wj)}\leq \rho^k\prod_{s\in S}f_s(x)^{\xi(s)}, \;\forall\;x\in \R^n.
\end{equation}
We prove it by induction, the case $k=0$ is a trivial identity (posing, by definition $\M^0:=\{\emptyset\}$). We consider any $k\in \N$ and supposing that~\eqref{eq:MainInequalityProof2} is verified for $k-1$ we prove that it is also true for $k$. Computing 
\[
\begin{aligned}
&\prod_{s\in S}\prod_{\wj\in \M^k}f_s(A(\widehat j)x)^{\bP_{\cG,\xi}(s,\wj)}=\prod_{s\in S}\prod_{\wj\in \M^k}f_s(A(\wj)x)^{\sum_{a\in S} \bP_{\cG,\xi}(a,\wj^-)p_{a,s,\wj_f}}\\\\
=&\prod_{s\in S}\prod_{\wj\in \M^k}\prod_{a\in S}\left [f_s(A(\wj)x)^{p_{a,s,\wj_f}} \right ]^{ \bP_{\cG,\xi}(a,\wj^-)}=\prod_{a\in S}\prod_{\wi\in \M^{k-1}}\left[\prod_{s\in S}\prod_{j\in \M} f_s(A_jA(\wi)x)^{p_{a,s,j}} \right ]^{ \bP_{\cG,\xi}(a,\wi)}
\\\\\leq &\rho\prod_{a\in S}\prod_{\wi\in \M^{k-1}} f_a(A(\wi)x)^{ \bP_{\cG,\xi}(a,\wi)}\leq \rho \rho^{k-1}\prod_{s\in S}f_s(x)^{\xi(s)}=\rho^{k}\prod_{s\in S}f_s(x)^{\xi(s)}.
\end{aligned}
\]
Now, using~\eqref{eq:SandwichInequality} and~\eqref{eq:MainInequalityProof2} we obtain
\[
\begin{aligned}
&\prod_{\wj\in \M^k} \|A(\wj)\|^{\mu_{\cG,\xi}(\wj)\,}=\prod_{s\in S}\prod_{\wj\in \M^k} \max_{x\neq 0}\frac{|A(\wj)x|}{|x|}^{\bP_{\cG,\xi}(s,\wj)}=\prod_{s\in S}\prod_{\wj\in \M^k} \left ( \max_{x\neq 0}\frac{|A(\wj)x|}{\prod_{a\in S}|x|^{\xi(a)}}\right )^{\bP_{\cG,\xi}(s,\wj)}\\\leq&\frac{\beta}{\alpha} \prod_{s\in S}\prod_{\wj\in \M^k} \left ( \max_{x\neq 0}\frac{f_s(A(\wj)x)}{\prod_{a\in S}f_a(x)^{\xi(a)}}\right )^{\bP_{\cG,\xi}(s,\wj)}\leq \frac{\beta}{\alpha}\rho^{k}.
\end{aligned}
\]
Concluding we have, for any $\xi \in \Xi_S$
$
\rho_0(\cA,\cG,\xi):=\limsup_{k\to \infty}\left[\prod_{\wj\in \M^k} \|A(\wj)\|^{\mu_{\cG,\xi}(\wj)\,}\right]^\frac{1}{k}\leq \lim_{k\to \infty} \left ( \frac{\beta}{\alpha}\right)^{\frac{1}{k}}\rho=\rho$,
as to be proven.

The idea of the proof of~\eqref{eq:InfimumJSREquality} can be found in~\cite{Pro10} for the case of i.i.d. matrices; the construction applies in the ergodic case (i.e. under Assumption~\ref{assum:StrongConn} and applying~\eqref{eq:SupremumProbability}), \emph{mutatis mutandis}.  
\end{proof}

In Proposition~\ref{prop:SpectralRadiiCharachterization} we consider the infimum over \emph{all} the possible candidate Lyapunov functions in $(\cF_n)^{|S|}$. Since, usually, it is practical to restrict the search to a particular subset (e.g. quadratic norms, SOS-polynomials, etcetera), we provide the following definition.

\begin{defn}\label{defn:TemplateConstrainedRadius}
Consider $\cA=\{A_1,\dots, A_N\}\subset \R^{n\times n}$ and a stochastic graph $\cG$. Consider a subclass of candidate Lyapunov functions $\cV\subset \cF_n$, we define
\begin{equation}\label{eq:TemmplateConstRadius}
\begin{aligned}
&\rho_{0,\cV}(\cA,\cG):=\\&\;\;\;\inf\left\{\rho>0\;\vert\;\exists\;F\subset\cV \;\;\text{LMF for $\cS(\cA,\cG)$ w.r.t. $\rho$}\right\}.
\end{aligned}
\end{equation}
\end{defn}

In Proposition~\ref{prop:SpectralRadiiCharachterization} we have proven that 
$\rho_{0,\cF_n}(\cG,\cA)=\sup_{\xi\in \Xi_S}\rho_0(\cG,\cA,\xi)$, and thus, for any $\cV\subset \cF_n$,
\begin{equation}\label{eq:ApproximationGivenFamily}
\sup_{\xi\in \Xi_S}\rho_0(\cG,\cA,\xi)\leq\rho_{0,\cV}(\cG,\cA).
\end{equation}
\section{Approximation of the probabilistic spectral radius: Graph-Based Lifts}\label{sec:LIFt}
In the following, we define some ``expansion techniques''  of stochastic switched systems, providing related results concerning the computation of the probabilistic spectral radius.

\vspace{-0.5cm}
\subsection{The Step Lift}
We introduce a formal expansion of stochastic graph which, intuitively, induces the same stochastic framework on $\Sigma_M$, while focusing not on the transition among \emph{letters} (i.e. $i\in \M$), but among sub-words of the form $\wi =(i_{K-1},\,\dots, \,i_0)\in \M^K$  of arbitrary length $K\in \N$. 
\begin{defn}[The Step Lift]\label{defn:TStepsLift}
Let us consider $M\in \N$, a stochastic graph $\cG=(S,E,p)$ on $\M$ and a set of matrices $\cA=\{A_1,\dots,A_M\}$. Given the system $\cS(\cA,\cG)$ and any integer $K\geq 1$, the \emph{$K$-step lift of $\cS(\cA,\cG)$} denoted by $\cL^K\cS(\cA,\cG)$ is a stochastic system defined by a stochastic graph on $\M^K$, $\cG^K=(S^K, E^K, p^K)$ and a set of matrices $\cA^K$ defined as follows:
\begin{enumerate}[leftmargin=*]
\item $S^K\equiv S$,
\item For any \virgolette{candidate edge} for $\cG^K$, $(a,b,\wi)\in S\times S \times \M^K$, we  inductively define its probability weight by
\[
p^K_{a,b,\wi}:=\sum_{c\in S} p^{K-1}_{a,c,\wi^-}\;p_{c,b,\wi_f}.
\]
By convention, if $p_{a,b,\wi}=0$ then $e=(a,b,\wi)\notin E^K$.\label{Item:DefnProbability}
\item $\cA^K:=\{A({\wi})\;\vert\;\wi\in \M^K\}$, where, given $\wi=(i_{K-1},\dots i_0)$ we recall that $A(\wi)=A_{i_{K-1}}\cdot\cdot \cdot A_{i_0}$.
\end{enumerate}
\end{defn}
It is clear that $\cL^1\,\cS(\cA,\cG)=\cS(\cA,\cG)$. In the following statement we collect the relations between the probability measures induced by $\cG$ and $\cG^K$, respectively.
\begin{lemma}\label{lemma:PropSimpleStepLIft}
Consider a stochastic graph $\cG$ on $\M$ and $K\in \N$; it holds that
$
P_{\cG^K}=P_{\cG}^K$, 
where $P_\cG$ and $P_{\cG^K}$ are defined as in~\eqref{eq:DefnMatrixAssociatedtoG}; this  implies $\xi_\cG=\xi_{\cG^K}$. Moreover, for any $k\in \N$, $K\in \N\setminus\{0\}$, any $\xi\in \Xi_S$ and any $\wi\in \M^{kK}$ we have
\begin{equation}\label{eq:MainPropertyTlifts}
\mu_{\cG,\xi}(\wi)=\mu_{\cG^K,\xi}(\wi).
\end{equation}
\end{lemma}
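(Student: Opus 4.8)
The plan is to prove the three assertions in sequence, each following from the recursive definitions by induction on $K$ (or on $k$).

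First I would establish $P_{\cG^K} = P_\cG^K$. By definition $(P_{\cG^K})_{a,b} = \sum_{\wi \in \M^K} p^K_{a,b,\wi}$, and using the inductive definition $p^K_{a,b,\wi} = \sum_{c \in S} p^{K-1}_{a,c,\wi^-} p_{c,b,\wi_f}$, I would sum over $\wi = (\wi^-, \wi_f)$ by splitting the sum over $\M^K$ into a sum over $\wi^- \in \M^{K-1}$ and $\wi_f \in \M$:
\[
(P_{\cG^K})_{a,b} = \sum_{c \in S} \Bigl(\sum_{\wi^- \in \M^{K-1}} p^{K-1}_{a,c,\wi^-}\Bigr)\Bigl(\sum_{\wi_f \in \M} p_{c,b,\wi_f}\Bigr) = \sum_{c \in S} (P_{\cG^{K-1}})_{a,c} (P_\cG)_{c,b},
\]
which by the induction hypothesis equals $(P_\cG^{K-1} P_\cG)_{a,b} = (P_\cG^K)_{a,b}$. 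Since $P_{\cG^K} = P_\cG^K$ and both matrices are stochastic and (under Assumption~\ref{assum:StrongConn}) irreducible, they share the same unique Perron left-eigenvector, giving $\xi_{\cG^K} = \xi_\cG$. (One should note $\cG^K$ inherits strong connectivity from $\cG$; this is routine since a path of length $mK$ in $\cG$ decomposes into a path of length $m$ in $\cG^K$.)

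Next, for~\eqref{eq:MainPropertyTlifts}, the cleanest route is to first prove the node-refined identity $\bP_{\cG^K,\xi}(s,\wi) = \bP_{\cG,\xi}(s,\wi)$ for every $s \in S$, every $\xi \in \Xi_S$, and every $\wi \in \M^{kK}$, by induction on $k$; then~\eqref{eq:MainPropertyTlifts} follows by summing over $s \in S$ via~\eqref{eq:ProbabilityDefnsPaths}. The base case $k=0$ is $\bP_{\cG^K,\xi}(s,\emptyset) = \xi(s) = \bP_{\cG,\xi}(s,\emptyset)$. For the inductive step, write $\wi \in \M^{(k+1)K}$ as a concatenation $\wi = (\wi', \wj)$ where $\wi' \in \M^{kK}$ is the initial block and $\wj \in \M^K$ is the final block of length $K$; so in the alphabet $\M^K$ the word $\wi$ corresponds to a word of length $k+1$ with predecessor corresponding to $\wi'$ and final letter $\wj$. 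Applying the recursion~\eqref{eq:ProbabilityDefns} for $\cG^K$ gives $\bP_{\cG^K,\xi}(s,\wi) = \sum_{a \in S} \bP_{\cG^K,\xi}(a,\wi') p^K_{a,s,\wj}$, and by the induction hypothesis $\bP_{\cG^K,\xi}(a,\wi') = \bP_{\cG,\xi}(a,\wi')$. So it remains to show $\sum_{a \in S} \bP_{\cG,\xi}(a,\wi') p^K_{a,s,\wj} = \bP_{\cG,\xi}(s,\wi)$, i.e. that unrolling one $\M^K$-step in $\cG$ reproduces the block weight $p^K$. This is an inner induction on $K$: expanding $p^K_{a,s,\wj} = \sum_{c} p^{K-1}_{a,c,\wj^-} p_{c,s,\wj_f}$ and peeling off one letter of $\cG$ at a time, using $\bP_{\cG,\xi}(c, (\wi', \wj^-)) = \sum_a \bP_{\cG,\xi}(a,\wi') p^{K-1}_{a,c,\wj^-}$ (the $(K-1)$-case) followed by one application of~\eqref{eq:ProbabilityDefns}, telescopes to exactly $\bP_{\cG,\xi}(s,\wi)$.

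I expect the main obstacle to be purely bookkeeping: keeping the bijection between $\M^{(k+1)K}$ and the length-$(k+1)$ words over the alphabet $\M^K$ straight, and matching up the ``predecessor / final letter'' decompositions at the two scales (the $\cG^K$-scale splits off the last $K$ letters at once, whereas the $\cG$-recursion splits off one letter at a time). There is no analytic difficulty — all series are finite sums — so the only risk is an indexing slip; a careful statement of the block-concatenation notation up front, together with the auxiliary claim that $\sum_{a} \bP_{\cG,\xi}(a,\wi') p^K_{a,s,\wj} = \bP_{\cG,\xi}(s, (\wi',\wj))$ isolated as a sub-lemma proved by induction on $K$, should make the argument transparent. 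The equivalence of~\eqref{eq:ProbabilityDefnsPaths} and~\eqref{eq:DefProbabilityCountingPaths} offers an alternative, perhaps more intuitive, proof via counting paths: every path of length $kK$ in $\cG$ labeled by $\wi$ corresponds bijectively to a path of length $k$ in $\cG^K$ labeled by the same word (grouped into blocks of $K$ letters), and the products of edge-probabilities agree by the definition of $p^K$; I would mention this as a remark but carry out the inductive computation for rigor.
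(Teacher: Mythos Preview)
Your argument is a careful unpacking of exactly what the paper's two-sentence proof sketches: the paper simply says that $P_{\cG^K}=P_\cG^K$ follows from the definitions (your induction on $K$) and that~\eqref{eq:MainPropertyTlifts} then follows from~\eqref{eq:ProbabilityDefns}--\eqref{eq:ProbabilityDefnsPaths} (your node-refined induction on $k$, with the inner sub-lemma). The path-counting alternative you mention via~\eqref{eq:DefProbabilityCountingPaths} is also in the spirit of the paper's remark that the two formulas are interchangeable.

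One inaccuracy to flag: your parenthetical claim that $\cG^K$ inherits strong connectivity from $\cG$ is not true in general. If $\cG$ is periodic (e.g.\ a two-cycle on nodes $a,b$ with $K=2$), then $\cG^K$ can split into disconnected components, since a path from $a$ to $b$ in $\cG^K$ would require a path of length a multiple of $K$ in $\cG$, which need not exist. Fortunately you do not actually need this: once $P_{\cG^K}=P_\cG^K$ is established, $\xi_\cG^\top P_\cG = \xi_\cG^\top$ immediately gives $\xi_\cG^\top P_{\cG^K} = \xi_\cG^\top$, so $\xi_\cG$ is invariant for $\cG^K$ without any appeal to irreducibility of $\cG^K$. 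Drop the connectivity remark and state the invariance directly.
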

\begin{proof}
The first part follows from the definition in~\eqref{eq:DefnMatrixAssociatedtoG} and from Item~\ref{Item:DefnProbability} of Definition~\ref{defn:TStepsLift}. Equation~\eqref{eq:MainPropertyTlifts} is a consequence of $P_{\cG^K}=P^{K}_\cG$, once recalled~\eqref{eq:ProbabilityDefns} and~\eqref{eq:ProbabilityDefnsPaths}.
\end{proof}
\begin{figure*}[t!]
\begin{subfigure}{0.3\linewidth}
  \centering
\begin{tikzpicture}%
  [>=stealth,
   shorten >=1pt,
   node distance=1.3cm,
   on grid,
   auto,
   every state/.style={draw=red!60, fill=red!5, thick}
  ]
\node[state,inner sep=1pt, minimum size=22pt] (left)                  {$a$};
\node[state,inner sep=1pt, minimum size=22pt] (right) [right=of left] {$b$};
\path[->]
   (left) edge[bend left=30]     node          [scale=1]            {$(2,\frac{2}{3})$} (right)
        (right)   edge[bend left=30] node        [scale=1]        {$(1,\frac{1}{4})$} (left)
   (left) edge[loop left=60]     node            [scale=1]          {$(1,\frac{1}{3})$} (left)
   (right) edge[loop right=60]     node      [scale=1]                {$(2,\frac{3}{4})$} (right)
   ;
\end{tikzpicture}
  \caption{The graph $\cH$ in Example~\ref{ex:GraphAndKlift}.}
  \label{Fig:GraphH}
  \end{subfigure}
  \begin{subfigure}{0.3\linewidth}
  \centering
\begin{tikzpicture}%
  [>=stealth,
   shorten >=1pt,
   node distance=3.2cm,
   on grid,
   auto,
   every state/.style={draw=red!60, fill=red!5, thick}
  ]
\node[state, inner sep=1pt, minimum size=20pt] (left)                  {$a$};
\node[state, inner sep=1pt, minimum size=20pt] (right) [right=of left] {$b$};
\path[->]
   (left) edge[bend left=50]     node              [scale=1]            {$(2\,1,\frac{2}{9})$} (right)
   (left) edge[bend left=10]     node             [scale=1]             {$(2\,2,\frac{1}{2})$} (right)
        (right)   edge[bend left=50] node          [scale=1]          {$(1\,1,\frac{1}{12})$} (left)
         (right)   edge[bend left=10] node            [scale=1]        {$(1\,2,\frac{3}{16})$} (left)
   (left) edge[out=130, in=90, looseness=8]   node                [above,scale=1]          {$(1\,1,\frac{1}{9})$} (left)
   (left) edge[out=230, in=270, looseness=8]   node [below, scale=1]           {$(1\,2,\frac{1}{6})$} (left)
   (right) edge[out=90, in=50, looseness=8]     node            [above,scale=1]           {$(2\,2,\frac{9}{16})$} (right)
   (right) edge[out=270, in=310, looseness=8]     node       [below, scale=1]    { $(2\,1,\frac{1}{6})$} (right)
   ;
\end{tikzpicture}
  \caption{The $2$-Step lift of $\cH$, $\cH^2$.}
\label{Fig:StpesLift}
  \end{subfigure}
  \begin{subfigure}{0.33\linewidth}
  \centering
\begin{tikzpicture}%
  [>=stealth,
  shorten >=1pt,
  node distance=1cm,
  on grid,
  auto,
  every state/.style={draw=red!60, fill=red!5, thick}
  ]
  \node[state, inner sep=1pt, minimum size=20pt] (left)                  {$aa$};
  \node[state, inner sep=1pt, minimum size=20pt] (right) [right=of left, xshift=2.5cm] {$bb$};
  \node[state, inner sep=1pt, minimum size=20pt] (upper) [above right=of left, xshift=1.1cm, yshift=0.05cm]{$ab$};
  \node[state, inner sep=1pt, minimum size=20pt] (below) [below right=of left, xshift=1.1cm, yshift=0cm]{$ba$};
  \path[->]
  (left) edge[loop left=60]     node             [scale=0.9]             {$(1,\frac{1}{3})$} (left)
  (left) edge[bend left=15]     node                      {$(2,\frac{2}{3})$} (upper)
  (upper) edge[bend left=15]     node                      {$(1,\frac{1}{4})$} (below)
  (below) edge[bend left=15]     node                      {$(1,\frac{1}{3})$} (left)
  (below) edge[bend left=15]     node                      {$(2,\frac{2}{3})$} (upper)
  (right) edge[bend left=15]     node                      {$(1,\frac{1}{4})$} (below)
  (upper) edge[bend left=15]     node                      {$(2,\frac{3}{4})$} (right)
  (right) edge[loop right=300]     node            [scale=0.9]              {$(2,\frac{3}{4})$} (right)
;
  \end{tikzpicture}
  \caption{ The Path lift of degree $1$, $\mathcal{H}_1$ of the graph $\cH$.}
  \label{fig:PathDependLift}
\end{subfigure}
\label{Fig:ThereeLifts}
  \caption{A stochastic graph and its $2$-Step Lift and Path Lift of degree $1$.}
\end{figure*}
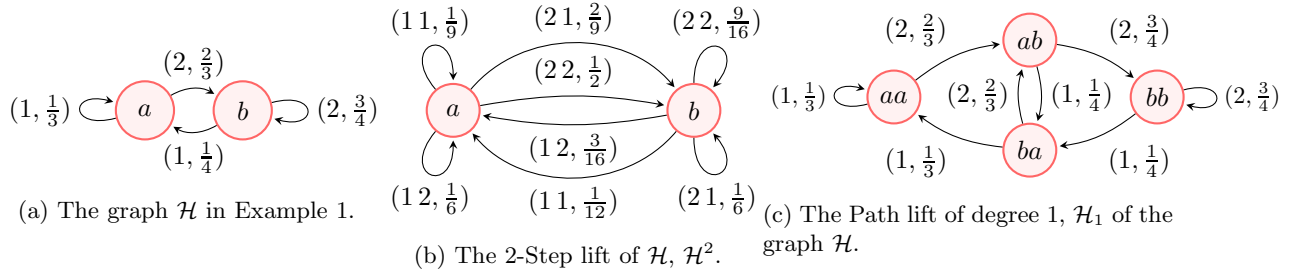

\begin{example}\label{ex:GraphAndKlift}
Consider the stochastic graph $\cH$  in Figure~\ref{Fig:GraphH}. It represents the case of a (strongly connected) Markov chain, with transition matrix given by $P=P_{\cH}=\left [\begin{smallmatrix} 1/3 & \;2/3 \\ 1/4 & \;3/4 \end{smallmatrix}\right]$ and $\xi_\cH=[3/11, \;8/11]^\top$. The corresponding step lift of degree $2$, denoted by $\cH^2$ is depicted in Figure~\ref{Fig:StpesLift}. Recalling the definition of $P_\cG$ given by~\eqref{eq:DefnMatrixAssociatedtoG} is easy to see that
$
P_{\cH^2}=\left [\begin{smallmatrix} 5/18 &13/18\\ 13/48& 35/48   \end{smallmatrix}\right]=P_\cH^2$,
as predicted by Lemma~\ref{lemma:PropSimpleStepLIft}.
\end{example}
\begin{thm}[Properties of Step Lift]\label{thm:TLIFT}
Consider $M\in \N$, a stochastic graph $\cG$ on $\M$ and $\cA=\{A_1,\dots,A_M\}\subset \R^{n\times n}$. For any $K\in \N\setminus\{0\}$, and any $\xi\in \Xi_S$ it holds that
\begin{equation}\label{eq:firstInequalityTstepseq}
\rho_0(\cA^K,\cG^K,\xi)=\left [\rho_0(\cA,\cG,\xi)\right]^K.
\end{equation}
For any (non-empty) subclass of candidate functions $\cV\subset \cF_n$, we have
\begin{equation}\label{eq:Kliftapproxiamtion}
\sup_{\xi\in \Xi}\rho_0(\cA,\cG,\xi)\leq \sqrt[K]{\rho_{0,\cV}(\cA^K,\cG^K)}\leq  \rho_{0,\cV}(\cA,\cG),
\end{equation}
and moreover, if $\cG$ satisfies Assumption~1, we have
\begin{equation}\label{eq:LimitKSteps}
\rho_0(\cA,\cG,\xi_\cG)=\lim_{K\to +\infty}\sqrt[K]{\rho_{0,\cV}(\cA^K,\cG^K)}.
\end{equation}
\end{thm}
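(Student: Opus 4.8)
The plan is to establish the three claims of Theorem~\ref{thm:TLIFT} in order, reusing Lemma~\ref{lemma:PropSimpleStepLIft} and Proposition~\ref{prop:SpectralRadiiCharachterization} as black boxes. For~\eqref{eq:firstInequalityTstepseq}, I would start from the definition~\eqref{eq:AlmostSureJSR} of $\rho_0(\cA^K,\cG^K,\xi)$: this is a $\limsup$ over $k$ of $\bigl[\prod_{\wi\in(\M^K)^k}\|A^K(\wi)\|^{\mu_{\cG^K,\xi}(\wi)}\bigr]^{1/k}$. The key observations are that a word of length $k$ over the alphabet $\M^K$ is exactly a word of length $kK$ over $\M$ (so the products are indexed by the same set $\M^{kK}$), that the matrix associated to such a word is literally the same product $A(\wi)$ of the original matrices $A_1,\dots,A_M$ in both systems, and that by~\eqref{eq:MainPropertyTlifts} the probability weights agree: $\mu_{\cG^K,\xi}(\wi)=\mu_{\cG,\xi}(\wi)$ for $\wi\in\M^{kK}$. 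Hence the bracketed quantity for $\cL^K\cS$ at index $k$ equals the bracketed quantity for $\cS$ at index $kK$. Taking the $1/k$-th root versus the $1/(kK)$-th root introduces precisely the exponent $K$; the only subtlety is that on the left we take the $\limsup$ along the subsequence $kK$ rather than over all integers, so I would note that the subsequential $\limsup$ along multiples of $K$ of $c_j^{1/j}$ (with $c_j=\prod_{\wi\in\M^j}\|A(\wi)\|^{\mu_{\cG,\xi}(\wi)}$) equals the full $\limsup$, which follows from a standard Fekete-type submultiplicativity argument on $c_j$ (using submultiplicativity of the operator norm and the chain-rule structure of $\mu_{\cG,\xi}$), or can simply be cited. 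Raising to the power $K$ then yields~\eqref{eq:firstInequalityTstepseq}.

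For the chain of inequalities~\eqref{eq:Kliftapproxiamtion}, the left inequality follows immediately from~\eqref{eq:ApproximationGivenFamily} applied to the lifted system $\cL^K\cS$ together with~\eqref{eq:firstInequalityTstepseq}: indeed $\sup_\xi\rho_0(\cA^K,\cG^K,\xi)=\bigl(\sup_\xi\rho_0(\cA,\cG,\xi)\bigr)^K$ by~\eqref{eq:firstInequalityTstepseq} (noting $\xi$ ranges over the same $\Xi_S$ since $S^K\equiv S$), and this is $\le\rho_{0,\cV}(\cA^K,\cG^K)$, so taking $K$-th roots gives the claim. The right inequality is the substantive one: I would prove it by a \emph{lifting construction on Lyapunov functions}. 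Given $\rho>\rho_{0,\cV}(\cA,\cG)$ and an LMF $F=\{f_a\mid a\in S\}\subset\cV$ for $\cS(\cA,\cG)$ w.r.t.\ $\rho$, i.e.\ $\prod_{i\in\M}\prod_{b\in S}f_b(A_ix)^{p_{a,b,i}}\le\rho f_a(x)$, I claim the \emph{same} family $F$ is an LMF for $\cL^K\cS(\cA,\cG)$ w.r.t.\ $\rho^K$. This is checked by iterating the one-step inequality $K$ times: expanding $\prod_{\wi\in\M^K}\prod_{b\in S}f_b(A(\wi)x)^{p^K_{a,b,\wi}}$ using the inductive definition $p^K_{a,b,\wi}=\sum_c p^{K-1}_{a,c,\wi^-}p_{c,b,\wi_f}$ and regrouping — exactly the telescoping computation already carried out in the displayed chain inside the proof of Proposition~\ref{prop:SpectralRadiiCharachterization} — gives $\le\rho^K f_a(x)$. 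Since $F\subset\cV$, this shows $\rho_{0,\cV}(\cA^K,\cG^K)\le\rho^K$, and letting $\rho\downarrow\rho_{0,\cV}(\cA,\cG)$ yields $\rho_{0,\cV}(\cA^K,\cG^K)\le\rho_{0,\cV}(\cA,\cG)^K$, i.e.\ the right inequality after taking $K$-th roots.

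For the limit~\eqref{eq:LimitKSteps}, under Assumption~\ref{assum:StrongConn} we have $\xi_{\cG^K}=\xi_\cG$ by Lemma~\ref{lemma:PropSimpleStepLIft}, and $\cG^K$ is strongly connected whenever $\cG$ is (a path of length $kK$ in $\cG$ projects to a path of length $k$ in $\cG^K$; I would spell this out briefly). Hence Proposition~\ref{prop:SpectralRadiiCharachterization} applies to $\cL^K\cS$ with the full class $\cF_n$, giving $\rho_{0,\cF_n}(\cA^K,\cG^K)=\rho_0(\cA^K,\cG^K,\xi_\cG)=\rho_0(\cA,\cG,\xi_\cG)^K$ by~\eqref{eq:firstInequalityTstepseq}; taking $K$-th roots, the sequence equals $\rho_0(\cA,\cG,\xi_\cG)$ \emph{exactly} for every $K$ when $\cV=\cF_n$. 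For a strict subclass $\cV$, the monotone chain~\eqref{eq:Kliftapproxiamtion} shows $\sqrt[K]{\rho_{0,\cV}(\cA^K,\cG^K)}$ is nonincreasing in $K$ (along the subsequence $K\mid K'$) and bounded below by $\rho_0(\cA,\cG,\xi_\cG)$, so the limit exists and is $\ge\rho_0(\cA,\cG,\xi_\cG)$; the matching upper bound requires showing it is $\le\rho_0(\cA,\cG,\xi_\cG)$. I expect this to be the main obstacle: the natural route is to take near-optimal continuous Lyapunov functions from Proposition~\ref{prop:SpectralRadiiCharachterization} for the lift and approximate them in $\cV$, which works cleanly only if $\cV$ is dense in $\cF_n$ in a suitable (homogeneous, uniform-on-the-sphere) sense — for instance $\cV$ the set of SOS-polynomial norms or the set of all polynomial $\ell^p$-type norms. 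I would therefore either add a density hypothesis on $\cV$, or invoke the analogous approximation result from the deterministic setting of~\cite{PhiEss16} adapted \emph{mutatis mutandis}, quantifying how well the optimal LMF of $\cL^K\cS$ — which after $K$-th-root rescaling witnesses $\rho_0(\cA,\cG,\xi_\cG)$ up to $\varepsilon$ — can be replaced by an element of $\cV$ at the cost of an error that vanishes as $K\to\infty$.
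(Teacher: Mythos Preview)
Your treatment of~\eqref{eq:firstInequalityTstepseq} and~\eqref{eq:Kliftapproxiamtion} matches the paper's argument essentially verbatim, including the inductive telescoping check that an LMF $F$ for $\cS(\cA,\cG)$ w.r.t.\ $\rho$ is automatically an LMF for $\cL^K\cS$ w.r.t.\ $\rho^K$.

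Your route for~\eqref{eq:LimitKSteps}, however, diverges from the paper's and contains a genuine gap. First, the auxiliary claim that $\cG^K$ is strongly connected whenever $\cG$ is fails in general: if $\cG$ is irreducible with period $d>1$ and $\gcd(K,d)>1$, then $P_{\cG^K}=P_\cG^K$ is reducible (the $d$ cyclic classes coalesce into $\gcd(K,d)$ disconnected blocks), so you cannot invoke the equality case of Proposition~\ref{prop:SpectralRadiiCharachterization} on the lifted graph. Second, and more importantly, the density hypothesis on $\cV$ that you propose to add is \emph{not needed}: the theorem as stated holds for an arbitrary non-empty $\cV\subset\cF_n$, and the paper proves exactly this.

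The idea you are missing is the following. Pick \emph{any single} $f\in\cV$ and use the constant family $f_s\equiv f$ for all $s\in S$. By homogeneity there exist $0<\alpha\le\beta$ with $\alpha|x|\le f(x)\le\beta|x|$. Assumption~\ref{assum:StrongConn} enters only through~\eqref{eq:SupremumProbability} applied to the original $\cG$: using the Dirac measures $\xi_a\in\Xi_S$ one has $\rho_0(\cA,\cG,\xi_a)\le\rho:=\rho_0(\cA,\cG,\xi_\cG)$ for every $a\in S$, so given $\varepsilon_1>0$ there is $K_1$ such that for all $K\ge K_1$ and all $a\in S$,
\[
\prod_{\wi\in\M^{K}}\prod_{b\in S}\|A(\wi)\|^{p^{K}_{a,b,\wi}}=\prod_{\wi\in\M^{K}}\|A(\wi)\|^{\mu_{\cG,\xi_a}(\wi)}\le(\rho+\varepsilon_1)^{K}.
\]
Sandwiching $f$ between $\alpha|\cdot|$ and $\beta|\cdot|$ converts this into
\[
\prod_{\wi\in\M^{K}}\prod_{b\in S}f(A(\wi)x)^{p^{K}_{a,b,\wi}}\le\tfrac{\beta}{\alpha}\,(\rho+\varepsilon_1)^{K}\,f(x),\qquad\forall\,a\in S,\ \forall\,x\in\R^n,
\]
i.e.\ the constant family $\{f_s\equiv f\}\subset\cV$ is an LMF for $\cL^{K}\cS$ w.r.t.\ $\tfrac{\beta}{\alpha}(\rho+\varepsilon_1)^{K}$. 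Hence $\sqrt[K]{\rho_{0,\cV}(\cA^K,\cG^K)}\le(\beta/\alpha)^{1/K}(\rho+\varepsilon_1)\to\rho+\varepsilon_1$. The fixed norm-equivalence constant $\beta/\alpha$ disappears under the $K$-th root, which is precisely why no approximation of optimal Lyapunov functions by elements of $\cV$ is required; your proposed detour through density of $\cV$ in $\cF_n$ is unnecessary.
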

\begin{proof}
The equivalence in~\eqref{eq:firstInequalityTstepseq} follows by~\eqref{eq:MainPropertyTlifts}. Indeed, consider any $K\in \N$ and any $\xi\in \Xi_S$ computing,
\[
\begin{aligned}
&\rho_0(\cA,\cG,\xi)=\limsup_{k\to \infty}\left[\prod_{\wi \in \M^k} \|A(\wi)\|^{\mu_{\cG,\xi}(\wi)\,}\right]^\frac{1}{k}= \left [\limsup_{k'\to \infty}\left[\prod_{\wj \in (\M^K)^{k'}} \|A(\wj)\|^{\mu_{\cG^K,\xi}(\wj)\,}\right]^\frac{1}{k'}\right]^{\frac{1}{K}}=\rho_0(\cA^K,\cG^K,\xi)^{\frac{1}{K}}.
\end{aligned}
\] 
For~\eqref{eq:Kliftapproxiamtion}, the left-inequality follows by~\eqref{eq:firstInequalityTstepseq} and~\eqref{eq:ApproximationGivenFamily}. For the right inequality in~\eqref{eq:Kliftapproxiamtion}, let us consider any $\wt \rho > \rho_{0,\cV}(\cA,\cG)$. We recall that, by definition, there exists $F:=\{f_s\;\;\vert\;s\in S\}\in \cV^{|S|}$, such that
\[
\prod_{i\in \M}\prod_{b\in S}f_b(A_ix)^{p_{a,b,i}}\leq \wt \rho f_a(x),\;\;\forall x\in \R^n,\;\forall a\in S.
\]
Firstly, we  prove by induction that, $\forall \,K\in \N\setminus\{0\}$, 
\begin{equation}\label{eq:TListsInductiveStep}
\prod_{\wi\in \M^K}\prod_{b\in S}f_b(A(\wi)x)^{p_{a,b, \wi}}\leq \wt \rho ^K f_a(x),\;\;\forall x\in \R^n,\;\forall a\in S.
\end{equation}
The case $K=1$ is trivial, let us suppose the statement is true for $K-1$, computing, for any $a\in A$ and any $x\in \R^n$ we have
\[
\begin{aligned}
&\prod_{\wi\in \M^K}\prod_{b\in S}f_b(A(\wi)x)^{p^K_{a,b, \wi}}
=\prod_{\wi\in \M^K}\prod_{b\in S}f_b(A(\wi)x)^{\sum_{c\in S}p^{K-1}_{a,c, \wi^-}p_{c,b,\wi_f}}\\
=&\prod_{\widehat j\in \M^{K-1}}\prod_{c\in S}\left [\prod_{h\in \M}\prod_{b\in S}f_b(A_hA(\widehat j)x)^{p_{c,b,h}}\right ]^{p^{K-1}_{a,c, \widehat j}}\leq \wt \rho \prod_{\widehat j\in \M^{K-1}}\prod_{c\in S}f_c(A(\widehat j\,)x)^{p^{K-1}_{a,c, \widehat j}}
\leq  \wt \rho ^{K}f_a(x).
\end{aligned}
\]
We have thus obtained that for any $\wt \rho> \rho_{0,\cV}(\cA,\cG)$ there exist a set $F=\{f_s,\;\vert\;s\in S\}\in \cV^{|S|}$ such that~\eqref{eq:TListsInductiveStep} holds, and recalling Definition~\ref{defn:TemplateConstrainedRadius}, we have $\rho_{0,\cV}(\cA^K,\cG^K)< \wt \rho ^K$ and thus $\rho_{0,\cV}(\cA^K, \cG^K)\leq \rho_{0,\cV}(\cA,\cG)^K$, as to be proven.
We now prove~\eqref{eq:LimitKSteps}; we denote  $\rho:=\rho_0(\cA,\cG,\xi_\cG)$.   Let us consider any $f\in\cV\subset\cF_n$ by homogeneity there exist $0<\alpha\leq \beta$ such that
\begin{equation}\label{eq:Sandwitch2}
\alpha |x|\leq f(x)\leq \beta |x|,\;\;\;\forall \;x\in \R^n.
\end{equation}
For any $a\in S$, we will denote by $\xi_a\in \Xi_S$ the indicator measure of $a\in S$, i.e. the probability measure defined by $\xi_a(a)=1$ and $\xi_a(s)=0$ for all $s\in S$, $s\neq a$. 
By~\eqref{eq:AlmostSureJSR} and~\eqref{eq:SupremumProbability}, for any $\varepsilon_1>0$ there exists a $K_1\in \N$ such that
\[
\left [\prod_{\wi \in \M^{K_1}} \|A(\wi)\|^{\mu_{\cG,\xi_a}(\wi)\,}\right]^\frac{1}{K_1}\leq \rho+\varepsilon_1,\;\;\;\; \forall a\in S.
\]
Recalling~\eqref{eq:DefProbabilityCountingPaths}, it is clear that $\mu_{\cG,\xi_a}(\wi)=\sum_{b\in S}p^{K_1}_{a,b,\wi}$, and we can thus write
\[
\left [\prod_{\wi \in \M^{K_1}}\prod_{b\in S} \|A(\wi)\|^{p^{K_1}_{a,b,\wi}}\right]^\frac{1}{K_1}\leq \rho+\varepsilon_1,\;\;\;\; \forall a\in S.
\]
Developing
\[
\begin{aligned}
\left [\prod_{\wi \in \M^{K_1}}\prod_{b\in S} \left(\max_{x\neq 0}\frac{|A(\wi)x|}{|x|}\right)^{p^{K_1}_{a,b,\wi}}\right]^\frac{1}{K_1}\leq \rho+\varepsilon_1,\;\;\;\forall\,a\in S,
\end{aligned}
\]
implies that, for all $x\in \R^n$, we have
\[
\left [\prod_{\wi \in \M^{K_1}}\prod_{b\in S} |A(\wi)x|^{p^{K_1}_{a,b,\wi}}\right]^\frac{1}{K_1}\leq (\rho+\varepsilon_1)\,|x|^{\frac{1}{K_1}},\,\forall \,a\in S.
\]
Now, identifying $f_s\equiv f$, for all $s\in S$ and using~\eqref{eq:Sandwitch2} we obtain that, for all $x\in \R^n$,
\[
\prod_{\wi \in \M^{K_1}}\prod_{b\in S} f_b(A(\wi)x)^{p^{K_1}_{a,b,\wi}}\leq\frac{\beta}{\alpha}(\rho+\varepsilon_1)^{K_1}\,f_a(x),\;\;\forall \,a\in S.
\]
Recalling the definition in~\eqref{eq:TemmplateConstRadius} and by induction step in the proof of~\eqref{eq:Kliftapproxiamtion}, we have thus proven that, for every $\varepsilon>0$, there exists a $K_1\in \N$ such that $\rho_\cV(\cG^K,\cA^K)\leq \frac{\beta}{\alpha}(\rho+\varepsilon_1)^{K}$, for every $K\geq K_1$. 
Now, given any $\varepsilon>0$, it suffices to choose $\varepsilon_1<\varepsilon$ small enough and $K\in \N$, $K\geq K_1$ big enough  such that $\frac{\beta}{\alpha}(\rho+\varepsilon_1)^{K}\leq (\rho+\varepsilon)^{K}$. This implies that, $\forall\varepsilon>0$ there exists $K\in \N$ such that
\[
\rho_\cV(\cG^K,\cA^K)\leq (\rho+\varepsilon)^{K}
\]
and thus $\lim_{K\to \infty}\rho_\cV(\cG^K,\cA^K)=\rho^K=\rho_0(\cG,\cA,\xi_\cG)^K$, concluding the proof.
\end{proof}

 We note here that in Lemma~\ref{lemma:PropSimpleStepLIft} and Theorem~\ref{thm:TLIFT}, Assumption~\ref{assum:StrongConn} is not required, except for the convergence property in~\eqref{eq:LimitKSteps}. In other words, since the $K$-step lift conserves the probabilistic structure for any $\cG$ and with respect to any initial probability $\xi\in \Xi_S$, the ergodicity is not required. 
\begin{rmk}
Theorem~\ref{thm:TLIFT} provides a technique to approximate the probabilistic spectral radius. Once a ``practical'' (in terms of optimization purposes) family of functions is fixed, choose a $K\in \N$, construct the $K$-step lift of the considered system $\cS(\cA,\cG)$. Then, solve (or approximate) the optimization problem in~\eqref{eq:TemmplateConstRadius} and find (an upper bound for) $\rho_{0,\cV}(\cA^K,\cG^K)$. The $K$-root, $\rho_{0,\cV}(\cA^K,\cG^K)^{\frac{1}{K}}$ by Theorem~\ref{thm:TLIFT} provides an upper bound for the probabilistic spectral radius of $\cS(\cA,\cG)$. Under Assumption~\ref{assum:StrongConn} this upper bound also converges to $\rho_0(\cA,\cG,\xi_\cG)$ \emph{no matter} the chosen family $\cV$; of course, the ``speed of convergence'' will depend on $\cV$.
\end{rmk}
\subsection{The Path Lift}
In this subsection we propose another lift, defining an augmented graph which, intuitively, adds memory to the framework, considering paths of given length as new states. 
\begin{defn}[The Path Lift]\label{defn:RPathss}
Consider $M\in \N$, a stochastic graph $\cG=(S,E,p)$ on $\M$ and $\cA=\{A_1,\dots,A_M\}\subset\R^{n\times n}$. Given any integer $R\geq 1$, the \emph{path lift of degree $R$ of $\cS(\cA,\cG)$} denoted by $\cL_R \cS(\cA,\cG)$ is a stochastic system composed by a stochastic graph on $\M$, $\cG_R=(S_R,E_R,p_R)$ defined recursively as follows:
\begin{enumerate}[leftmargin=*]
\item For any path of length $R$, $\overline q=(e_1,\dots, e_R)\in \Pt^R(\cG)$ in $\cG$,  add a node $s_{\overline q}\in S_R$;
\item For each path $\overline r=(e_1,e_2,\dots e_R,e_{R+1})$ of length $R+1$ in $\cG$, with $i\in \M$ the label of $e_{R+1}$, add in $E_R$ the edge $(s_{\overline{q}_1},s_{\overline{q}_2},i)$, where $\overline{q}_1=(e_1,\dots, e_R)$ and $\overline{q}_2=(e_2,\dots, e_{R+1})$;
\item For any path $\overline r=(e_1,e_2,\dots e_R,e_{R+1})\in E_R$ of length $R+1$, set $p_R(\overline r)=p(e_{R+1})$.\label{Item:Item3DefnPathLifts}
\end{enumerate}
\end{defn}
Given any $\xi\in \Xi_S$, we consider \emph{the path lift measure of degree $R$ $\xi_R\in \Xi_{S_R}$} defined as follows: for every $s_{\overline q}\in S_R$ with $\overline q=(e_1,\dots, e_R)$ and $e_1=(a,b,i)\in E$, define:
\begin{equation}\label{eq:RLiftMeasure}
\xi_R(s_{\overline q}):=\xi(a)p(e_1)\cdots p(e_R).
\end{equation}
From Definition~\ref{defn:RPathss} we have that, for any $R\in \N$, $\cG_{R+1}=(\cG_R)_1$ and, similarly, for any $\xi\in \Xi_S$, $\xi_{R+1}=(\xi_R)_1$ i.e. the path lift of degree $R+1$ is the path lift (of degree $1$) of the path lift of degree $R$.
It can be seen that, given any stochastic graph $\cG$ and any $\xi\in \Xi_S$ and any $R\in \N$, $\cG_R$ and $\xi_R\in \Xi_{S_R}$ introduced in Definition~\ref{defn:RPathss} are a well-defined stochastic graph and a probability measure, respectively. Moreover, if $\cG$ satisfies Assumption~\ref{assum:StrongConn}, so does $\cG_R$.
\begin{lemma}\label{lem:LyapunovMeasureRLift}
Consider a stochastic graph $\cG$ satisfying Assumption~\ref{assum:StrongConn}, and consider $\xi_\cG$ its invariant measure. Then, for every $R\in \N$, we have that
$
(\xi_\cG)_R=\xi_{\cG_R}$, 
i.e. the path lift measure of degree $R$ of $\xi_\cG$ is the invariant measure of $\cG_R$.
\end{lemma}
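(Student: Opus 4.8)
The plan is to exploit uniqueness of the invariant measure. Since $\cG$ satisfies Assumption~\ref{assum:StrongConn}, so does $\cG_R$ (as noted right after Definition~\ref{defn:RPathss}), and hence, by the Perron--Frobenius argument recalled before Lemma~\ref{lemma:Ergodicity}, $\cG_R$ admits a \emph{unique} invariant measure $\xi_{\cG_R}\in\Xi_{S_R}$, characterized as the unique element of $\Xi_{S_R}$ with $\xi_{\cG_R}^\top P_{\cG_R}=\xi_{\cG_R}^\top$. It therefore suffices to check that $(\xi_\cG)_R\in\Xi_{S_R}$ and that $((\xi_\cG)_R)^\top P_{\cG_R}=((\xi_\cG)_R)^\top$; positivity of all components is automatic, since $\xi_\cG>0$ component-wise and $p(e)\in(0,1]$ for every $e\in E$, so that $(\xi_\cG)_R(s_{\overline q})=\xi_\cG(\st(\overline q))\,p(\overline q)>0$. (Alternatively one may first prove the statement for $R=1$ and then bootstrap to general $R$ via $\cG_{R+1}=(\cG_R)_1$, $\xi_{R+1}=(\xi_R)_1$ and the fact that $\cG_R$ is again strongly connected; this is the same computation, so I describe it directly for general $R$.)

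For the normalization I would write
\[
\sum_{\overline q\in\Pt^R(\cG)}(\xi_\cG)_R(s_{\overline q})=\sum_{a\in S}\xi_\cG(a)\sum_{\substack{\overline q\in\Pt^R(\cG)\\ \st(\overline q)=a}}p(\overline q)=\sum_{a\in S}\xi_\cG(a)\,(P_\cG^R\I)_a=\sum_{a\in S}\xi_\cG(a)=1,
\]
using that the inner sum is $(P_\cG^R\I)_a$ and $P_\cG$ is row-stochastic; equivalently this is $\sum_{\wi\in\M^R}\mu_{\cG,\xi_\cG}(\wi)=1$ via~\eqref{eq:DefProbabilityCountingPaths}.

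The heart of the proof is the invariance identity, and the main obstacle is the bookkeeping of the edge set of the path lift. Fix a node $s_{\overline q_2}$ of $\cG_R$ with $\overline q_2=(f_1,\dots,f_R)\in\Pt^R(\cG)$. By Definition~\ref{defn:RPathss}, every edge of $\cG_R$ ending at $s_{\overline q_2}$ comes from a length-$(R+1)$ path $\overline r=(e_1,\dots,e_R,e_{R+1})$ with $(e_2,\dots,e_{R+1})=(f_1,\dots,f_R)$; thus $e_{R+1}=f_R$ is \emph{forced}, the label of that edge is necessarily $\lab(f_R)$, its weight is $p_R(\overline r)=p(e_{R+1})=p(f_R)$, and its source is $s_{\overline q_1}$ with $\overline q_1=(e_1,f_1,\dots,f_{R-1})$, where $e_1$ ranges exactly over the edges of $\cG$ with $\en(e_1)=\st(f_1)$. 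Distinct $e_1$ give distinct source nodes, so there is no multiplicity to handle in $P_{\cG_R}$. Substituting $(\xi_\cG)_R(s_{\overline q_1})=\xi_\cG(\st(e_1))\,p(e_1)p(f_1)\cdots p(f_{R-1})$ into the sum over incoming edges and factoring $p(f_1)\cdots p(f_R)$ out, I am left with
\[
p(f_1)\cdots p(f_R)\sum_{\substack{e_1\in E\\ \en(e_1)=\st(f_1)}}\xi_\cG(\st(e_1))\,p(e_1)=p(f_1)\cdots p(f_R)\sum_{a\in S}\xi_\cG(a)\,(P_\cG)_{a,\st(f_1)},
\]
since $\sum_{e_1:\,\st(e_1)=a,\,\en(e_1)=c}p(e_1)=\sum_{j\in\M}p_{a,c,j}=(P_\cG)_{a,c}$. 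The last sum equals $(\xi_\cG^\top P_\cG)_{\st(f_1)}=\xi_\cG(\st(f_1))$ by invariance of $\xi_\cG$ for $\cG$, so the whole expression is $\xi_\cG(\st(f_1))\,p(f_1)\cdots p(f_R)=\xi_\cG(\st(\overline q_2))\,p(\overline q_2)=(\xi_\cG)_R(s_{\overline q_2})$, which is precisely the invariance equation for $P_{\cG_R}$. By uniqueness, $(\xi_\cG)_R=\xi_{\cG_R}$. The only delicate point, as anticipated, is correctly reading off from Definition~\ref{defn:RPathss} which edges enter a given node and with what weights; once that is pinned down, the computation collapses to the invariance of $\xi_\cG$ for the original graph.
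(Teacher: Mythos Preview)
Your proof is correct and follows essentially the same approach as the paper: verify that $(\xi_\cG)_R$ satisfies the invariance equation for $P_{\cG_R}$ and then invoke uniqueness of the Perron--Frobenius invariant measure. The paper carries out the computation only for $R=1$ and bootstraps via $\cG_{R+1}=(\cG_R)_1$, while you do the general $R$ case directly (and additionally check normalization explicitly), but the underlying argument and the key step---collapsing the incoming-edge sum to $\xi_\cG^\top P_\cG=\xi_\cG^\top$---are the same.
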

\begin{proof}
Consider any stochastic graph $\cG=(S,E,p)$.
Since, for any $R\in \N$ we have that $\cG_{R+1}=(\cG_R)_1$ it suffices to prove the claim for $R=1$.
We want to prove that $(\xi_\cG)_1=\xi_{\cG_1}$; since $\xi_\cG$ is the invariant measure of $\cG$, recalling~\eqref{eq:DefnMatrixAssociatedtoG}, we have that
$
\xi_\cG(r)=\sum_{s\in S}\xi_\cG(s)\sum_{i\in \M}p_{s,r,i}$, $\forall r\in S$.
Consider any $e=(a,b,i)\in \cG$, we have that
\[
\begin{aligned}
(\xi_\cG)_1&(e)=\xi_\cG(a)p(e)=\sum_{s\in S}\xi_\cG(s)\sum_{i\in \M}p_{s,a,i}\,p(e)\\&=\sum_{s\in S}\sum_{i\in \M}\xi_\cG(s)p_{s,a,i}\,p(e)\\&=\sum_{\substack{f\in E, \\ \text{\emph{end}}(f)= \text{\emph{st}}(e)} }(\xi_{\cG})_1(f)\,p(e)=\sum_{f\in E}(\xi_{\cG})_1(f)\, p_1((f,e)),
\end{aligned}
\] 
recalling that, by Item~\ref{Item:Item3DefnPathLifts} of Definition~\ref{defn:RPathss}, we have $p_1((f,e))=p(e)$.
 We have thus proven that $(\xi_\cG)_1$ is Lyapunov for $\cG_1$, and by uniqueness of invariant measure, we conclude.
\end{proof}
\begin{example}\label{ex:ExamplePathDepLift}
Consider again the stochastic graph $\cH$ in Figure~\ref{Fig:GraphH}. The corresponding path lift of degree $1$, $\cH_1$ is represented in Figure~\ref{fig:PathDependLift}. It can be seen that (considering the lexicographic order on the nodes), we have
\[
P_{\cH_1}=\left [\begin{smallmatrix} 1/3 & \;2/3 & \;0 & \;0 \\ 0 &\; 0& \; 1/4 & \;3/4 \\ 1/3& \;2/3 &\;0 & \;0\\ 0 &\; 0& \; 1/4 & \;3/4 \end{smallmatrix}\right],
\]
and computing, we obtain $\xi_{\cH_1}=[\begin{smallmatrix}1/11 &\; 2/11 &\;2/11 &\;6/11\end{smallmatrix}]^\top$, which is equal to $(\xi_\cH)_1$, as proven in Lemma~\ref{lem:LyapunovMeasureRLift}. 
\end{example}
Now we can prove the main result of this subsection, establishing relations between the probability measure on $(\Sigma_M,\cB(\Sigma_M))$ induced by a stochastic $\cG$ and $\xi\in \Xi_S$ and by $\cG_R$, its path lift of degree $R$, and the corresponding $\xi_R$.
\begin{thm}[Properties path lift of degree $R$]\label{thm:PropertiesPatg}
Consider any stochastic graph $\cG$, any distribution $\xi\in \Xi_S$ and any $R\in \N$.
 For all $k\in \N$, for all $\wi \in \M^k$, we have \label{item:R.1}
\begin{equation}\label{eq:MeasureRPathLift}
\begin{aligned}
\mu_{\cG_R,\xi_R}&(\wi)=\mu_{\cG,\xi} (\ell^{-R}(\,[\wi]))\\&=\sum_{\wj\in \ell^{-R}([\wi])} \mu_{\cG,\xi}(\wj),\;\;\forall \wi\in \M^k,\,\forall k\in \N.
\end{aligned}
\end{equation}
If $\cG$ satisfies Assumption~\ref{assum:StrongConn}, we have \label{item:R.2}
\begin{equation}\label{eq:EquivalenceMeasureUnderErgodicity}
\mu_{\cG_R,\xi_{\cG_R}}=\mu_{\cG,\xi_\cG},
\end{equation}
and, for any  $\cA=\{A_1\dots,A_M\}\subset\R^{n \times n}$ and any $\cV\subset\cF_n$, 
\begin{equation}\label{eq:EquivalencesPathsLiftRadius}
\rho_0(\cA,\cG,\xi_\cG)\hspace{-0.05cm}=\hspace{-0.05cm}\rho_0(\cA,\cG_R,\xi_{\cG_R})\hspace{-0.05cm}\leq \rho_{0,\cV}(\cA,\cG_R)\leq \rho_{0,\cV}(\cA,\cG).
\end{equation}
\end{thm}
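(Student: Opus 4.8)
The plan is to establish~\eqref{eq:MeasureRPathLift} first, since~\eqref{eq:EquivalenceMeasureUnderErgodicity} and~\eqref{eq:EquivalencesPathsLiftRadius} will then follow quickly from it together with Lemmas~\ref{lemma:Ergodicity} and~\ref{lem:LyapunovMeasureRLift} and inequality~\eqref{eq:ApproximationGivenFamily}. For~\eqref{eq:MeasureRPathLift} I would argue directly from the path-counting formula~\eqref{eq:DefProbabilityCountingPaths} and the definition~\eqref{eq:RLiftMeasure} of $\xi_R$. By Definition~\ref{defn:RPathss}, a path $\overline q$ of length $k$ in $\cG_R$ starting at the node $s_{\overline r}$, $\overline r=(e_1,\dots,e_R)\in\Pt^R(\cG)$, corresponds to a unique path $\overline s=(e_1,\dots,e_R,e_{R+1},\dots,e_{R+k})\in\Pt^{R+k}(\cG)$; moreover $\lab(\overline q)=\wi$ precisely when the labels of $e_{R+1},\dots,e_{R+k}$ spell $\wi$, and, using~\eqref{eq:RLiftMeasure} and Item~\ref{Item:Item3DefnPathLifts} of Definition~\ref{defn:RPathss}, one gets $\xi_R(\st(\overline q))\,p_R(\overline q)=\xi(\st(e_1))\,p(e_1)\cdots p(e_{R+k})=\xi(\st(\overline s))\,p(\overline s)$. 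Summing this identity over all length-$k$ paths of $\cG_R$ labelled by $\wi$ reorganises into a sum over all length-$(R+k)$ paths of $\cG$ whose last $k$ labels equal $\wi$, i.e. over those $\overline s$ with $\lab(\overline s)\in\ell^{-R}([\wi])$ (the first $R$ coordinates being free), so by~\eqref{eq:DefProbabilityCountingPaths} again it equals $\sum_{\wj\in\ell^{-R}([\wi])}\mu_{\cG,\xi}(\wj)$; finite additivity of $\mu_{\cG,\xi}$ over this finite family of pairwise disjoint cylinders turns that into $\mu_{\cG,\xi}(\ell^{-R}([\wi]))$. Equivalently, one may prove the case $R=1$ this way and then induct on $R$ via $\cG_{R+1}=(\cG_R)_1$, $\xi_{R+1}=(\xi_R)_1$ and $\ell^{-(R+1)}=\ell^{-1}\circ\ell^{-R}$.

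Granting~\eqref{eq:MeasureRPathLift}, I get~\eqref{eq:EquivalenceMeasureUnderErgodicity} as follows: under Assumption~\ref{assum:StrongConn}, Lemma~\ref{lem:LyapunovMeasureRLift} gives $(\xi_\cG)_R=\xi_{\cG_R}$, so taking $\xi=\xi_\cG$ in~\eqref{eq:MeasureRPathLift} yields $\mu_{\cG_R,\xi_{\cG_R}}(\wi)=\mu_{\cG,\xi_\cG}(\ell^{-R}([\wi]))$; iterating the shift-invariance of $\mu_{\cG,\xi_\cG}$ from Lemma~\ref{lemma:Ergodicity} $R$ times collapses the right-hand side to $\mu_{\cG,\xi_\cG}(\wi)$, and since a Borel measure on $\Sigma_M$ is determined by its values on cylinders, this proves the equality of the two measures. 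The first equality in~\eqref{eq:EquivalencesPathsLiftRadius} is then immediate, because in~\eqref{eq:AlmostSureJSR} the quantity $\rho_0$ depends on the graph and initial distribution only through the numbers $\mu(\wi)$, $\wi\in\M^k$ (the matrix set $\cA$ being unchanged by the path lift); the middle inequality is just~\eqref{eq:ApproximationGivenFamily} applied to the stochastic graph $\cG_R$, which inherits Assumption~\ref{assum:StrongConn} from $\cG$.

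For the last inequality $\rho_{0,\cV}(\cA,\cG_R)\le\rho_{0,\cV}(\cA,\cG)$ a short construction is needed. Given $\rho>0$ and an LMF $\{f_a\in\cV\mid a\in S\}$ for $\cS(\cA,\cG)$ w.r.t. $\rho$, I set, for each node $s_{\overline q}\in S_R$ with $\overline q=(e_1,\dots,e_R)$, the function $g_{s_{\overline q}}:=f_{\en(e_R)}\in\cV$ (the Lyapunov function of the ``current'' node). Checking~\eqref{eq:DefinitionLyapunovFunctionals} for $\cG_R$ at $s_{\overline q}$: by Definition~\ref{defn:RPathss} the out-edges of $s_{\overline q}$ are indexed by the edges $e_{R+1}$ of $\cG$ with $\st(e_{R+1})=\en(e_R)$, each with weight $p(e_{R+1})$, the one of label $i$ leading to $s_{(e_2,\dots,e_{R+1})}$, on which $g$ equals $f_{\en(e_{R+1})}$; grouping these edges by $(\en(e_{R+1}),\lab(e_{R+1}))=(b,i)$ turns the left-hand side of~\eqref{eq:DefinitionLyapunovFunctionals} for $\cG_R$ into $\prod_{i\in\M}\prod_{b\in S}f_b(A_ix)^{p_{\en(e_R),b,i}}$, which is $\le\rho\,f_{\en(e_R)}(x)=\rho\,g_{s_{\overline q}}(x)$ by the LMF inequality for $\{f_a\}$ at the node $\en(e_R)$. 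Hence every $\rho$ admitting an LMF in $\cV$ for $\cG$ admits one for $\cG_R$, whence the inequality, and the proof is complete.

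I expect the only genuinely delicate step to be the bookkeeping inside~\eqref{eq:MeasureRPathLift}: keeping precise track of which edges of the lifted path carry which letters of $\wi$, and verifying that the resulting sum over length-$(R+k)$ paths in $\cG$ matches exactly the cylinder set $\ell^{-R}([\wi])$. Once this correspondence is pinned down (and the induction bookkeeping, if one goes that route), everything else — the ergodic collapse via shift-invariance, the radius identity, and the LMF-lifting inequality — is a routine consequence of~\eqref{eq:MeasureRPathLift} and of the lemmas already in place.
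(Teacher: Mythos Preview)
Your proposal is correct and follows essentially the same route as the paper: the paper reduces to $R=1$ via $\cG_{R+1}=(\cG_R)_1$ and then proves~\eqref{eq:MeasureRPathLift} by the same path-counting bijection you describe, deduces~\eqref{eq:EquivalenceMeasureUnderErgodicity} from Lemma~\ref{lem:LyapunovMeasureRLift} and shift-invariance, and proves the last inequality in~\eqref{eq:EquivalencesPathsLiftRadius} by lifting an LMF via $g_e:=f_{\en(e)}$, exactly your $g_{s_{\overline q}}:=f_{\en(e_R)}$. The only cosmetic difference is that you carry out the computation for general $R$ directly (and mention the inductive alternative), whereas the paper works at $R=1$ and invokes the recursion.
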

\begin{proof}
 Since, we recall, for any $R\in \N$, we have that $\cG_{R+1}=(\cG_R)_1$ and, similarly, for any $\xi\in \Xi_S$, $\xi_{R+1}=(\xi_R)_1$, it suffices to prove the claims for $R=1$. Let us prove~\eqref{eq:MeasureRPathLift};
consider any $k\in \N$ and any $\wi \in \M^k$, recalling~\eqref{eq:DefProbabilityCountingPaths} and Definition~\ref{defn:RPathss}, we compute
\[
\begin{aligned}
\mu_{\cG_1,\xi_1}(\wi)&=\sum_{e\in E}\xi_1(e)\sum_{\substack{\overline q\in \Pt^k(\cG)\\ \text{\emph{end}}(e)=\text{\emph{st}}(\overline q),\\\text{\emph{lab}}(\overline q)=\wi}} p (\overline q)=\sum_{s\in S}\xi(s)\sum_{\substack{e\in E\\\text{\emph{st}}(e)=s} }p(e)\sum_{\substack{\overline q\in \Pt^k(\cG)\\ \text{\emph{end}}(e)=\text{\emph{st}}(\overline q),\\\text{\emph{lab}}(\overline q)=\wi}} p (\overline q) \\\\
&=\sum_{j\in \M}\sum_{s\in S}\xi(s)\sum_{\substack{e\in E\\\text{\emph{st}}(e)=s\\\text{\emph{lab}}(e)=j} }p(e)\sum_{\substack{\overline q\in \Pt^k(\cG)\\ \text{\emph{end}}(e)=\text{\emph{st}}(\overline q),\\\text{\emph{lab}}(\overline q)=\wi}} p (\overline q)=\sum_{j\in \M}\sum_{s\in S}\xi(s)\sum_{\substack{\overline r\in \Pt^{k+1}(\cG)\\ \text{\emph{st}}(\overline r)=s,\\\text{\emph{lab}}(\overline r)=(\wi,j)}} p (\overline r)\\&=\sum_{\wj\in \ell^{-1}([\wi])} \mu_{\cG,\xi}(\wj)=\mu_{\cG,\xi} (\ell^{-1}(\,[\wi])),
\end{aligned}
\]
concluding the proof of~\eqref{eq:MeasureRPathLift}. For~\eqref{eq:EquivalenceMeasureUnderErgodicity}, it suffices to recall Lemma~\ref{lem:LyapunovMeasureRLift}, and the fact that, by Lemma~\ref{lemma:Ergodicity}, $\mu_{\cG,\xi_\cG}$ is shift-invariant and ergodic, i.e.
$
\mu_{\cG,\xi_\cG}(\wi)=\mu_{\cG,\xi} (\ell^{-1}(\,[\wi]))$, $\forall\,\wi\in \M^k,\forall k\in \N$.
For~\eqref{eq:EquivalencesPathsLiftRadius}, the equality and the first  inequality are straightforward by~\eqref{eq:MeasureRPathLift} and~\eqref{eq:EquivalenceMeasureUnderErgodicity}. Let us thus prove, given any family of function $\cV\subset \cF_n$, that  $\rho_{0,\cV}(\cA,\cG_1)\leq \rho_{0,\cV}(\cA,\cG)$. Consider any $\rho>\rho_{0,\cV}(\cA,\cG)$, then recalling there exists $\cW=\{g_s\;\vert\;s\in S\}\subset\cV$, a set of Lyapunov function for $\cS(\cA,\cG)$ w.r.t. $\rho$, i.e., we recall, implies that
\[
\prod_{i\in \M}\prod_{b\in S}f_b(A_ix)^{p_{a,b,i}}\leq \rho f_a(x),\;\;\;\;\forall x\in \R^n, \forall a\in S,
\]
or, rewriting it in a more convenient form for our purposes,
\[
\prod_{e\in E,\,\text{\emph{st}}(e)=a}f_{\text{\emph{end}}(e)}(A_{\text{\emph{lab}}(e)}x)^{p(e)}\leq \rho f_a(x),\,\forall x\in \R^n, \forall a\in S.
\]
We want to construct, from $\cW$, a set of Lyapunov function for $\cG_1$ w.r.t. $\rho$. Let thus define $\cW_1:=\{g_e\equiv f_{\en(e)}\;\vert\;e\in E\}$, consider any $e=(c,a,i)\in E$, we have 
\[
\begin{aligned}
\prod_{\substack{\wt e\in E\\ \st(\wt e)=a}}g_{\wt e}(A_{\lab(\wt e)}x)^{p(\wt e)}&=\prod_{\substack{\wt e\in E\\ \st(\wt e)=a}}f_{\en(\wt e)}(A_{\lab(\wt e)}x)^{p(\wt e)}\leq \rho f_a(x)=\rho g_{e}(x),\;\;\; \forall x\in \R^n,
\end{aligned}
\]
proving that $\cW_1$ is a set of Lyapunov function for $\cS(\cA,\cG_1)$ w.r.t. $\rho$, thus concluding the proof.\hfill $\square$
\end{proof}
\section{Numerical Example}\label{sec:NumExample}
In this section, we consider a positive stochastic switched system as in~\eqref{eq:SwitchedSysIntro}, denoted by $\cS(\cH,\cA)$, with the stochastic graph $\cH$ on $\langle 2\rangle$ in Figure~\ref{Fig:GraphH}, already studied in Examples~\ref{ex:GraphAndKlift}~and~\ref{ex:ExamplePathDepLift}, while the set of positive matrices $\cA=\{A_1,A_2\}\subset\R^{2\times 2}_{\geq 0}$ is defined by
\[
A_1:=\begin{bmatrix}0.5 & 1 \\ 0 & 0.5 \end{bmatrix},\;\;\; A_2:=\begin{bmatrix}1 & \,0 \\ 0.1 & \,1 \end{bmatrix}.
\]
The same set of matrices was studied in~\cite[Example 3.2]{Fang97} (with a different underlying irreducible Markov chain).
We note that system $\cS(\cH,\cA)$ is \emph{first moment unstable (or unstable in mean)} i.e., there exists $x_0\in \R^n$ for which
$
\lim_{k\to \infty}\E_{\cH,\xi_{\cH}}\left(|x(k,x_0,\sigma)|\right)>0$,
i.e., the expected norm with respect to the measure $\mu_{\cH,\xi_\cH}$ does not asymptotically converge to $0$.
\mat{Indeed, by positivity of $A_1$ and $A_2$ and applying~\cite[Theorem 2.4]{JunPro11}, $\cS(\cH,\cA)$ is unstable in mean since the ``averaged'' matrix
\[
B=\xi_\cH(a)A_1+\xi_\cH(b)A_2=\tfrac{3}{11}A_1+\tfrac{8}{11}A_2=\left [\begin{smallmatrix} 19/22 & \;3/11 \\ 4/55 & \;19/22 \end{smallmatrix}\right ]
\]
is Schur unstable, i.e. $\rho(B)>1$.} \mat{It can be also shown that the sufficient and necessary LMI conditions for \emph{mean square stability} illustrated in~\cite{CosFra93} are infeasible for $\cS(\cH,\cA)$.} In what follows we show that $\cS(\cH,\cA)$ is almost sure stable, using the ideas developed in Section~\ref{sec:LIFt}.
\begin{table}[b!]\label{table1}
\vspace{0.3cm}
\centering
\caption{{\footnotesize Numerical upper bounds of the probabilistic spectral radius  obtained with different lifts and different candidate functions templates.}}
\begin{tabular}{ |c|c|c|c|} 
 \hline
 Lift $\cG$: & $\cH$ & $\cH^2$ & $\cH_1$  \\ 
 \hhline{|=|=|=|=|}
  $\rho_{0,\cQ}(\cA,\cG)$ & $1.002$ & $0.896$ & $0.998$ \\
 \hline 
 $\rho_{0,\cD}(\cA,\cG)$ & $1.169$ & $1.045$ & $1.036$ \\
 \hline  
\end{tabular}
\end{table}

\mat{
First, we consider, as set of candidate Lyapunov functions, the set $\cQ\subset \cF_2$ of \emph{quadratic norms}, defined by
$
\cQ:=\{f_Q:\R^2_{\geq 0}\to \R\;\vert\;Q\succ 0\},
$
with $f_Q(x):=\sqrt{x^\top Q x}$. 
To compute an upper bound of the minimal $\rho$ satisfying~\eqref{eq:DefinitionLyapunovFunctionals} we use the techniques presented in~\cite[Corollary 2.2.]{Fang97}. In this case, simply considering the original Markov chain $\cH$, we are unable to provide a stability certificate, since we have obtained $\rho_{0,\cQ}(\cA,\cH)\leq 1.002$.
Instead, considering the lifts $\cH^2$ and $\cH_1$ depicted in Figures~1, we obtain certificates of almost sure stability, since we have that $\rho_{0,\cQ}(\cA,\cH^2)<1$ and $\rho_{0,\cQ}(\cA,\cH_1)<1$. The obtained numeric values are reported in Table I.

For the sake of completeness, we report that we have also considered the set of candidate Lyapunov functions $\cD\subset \cF_2$ of \emph{dual copositive norms} defined by
$
\cD:=\{f^\star_v:\R^2_{\geq0}\to \R\;\vert\;v\in \R^2_{>0}\}$,
where, given $v\in \R^2_{>0}$, we define
$
f^\star_v(x):=\max_{i\in \{1,2\}}\left\{ \frac{x_i}{v_i}  \right\}$, $\forall \;x\in \R^2_{\geq 2}$. These functions are valid norms in this case, since $A_1$ and $A_2$ are positive, for further discussion regarding copositive norms, we refer to~\cite[Section 4]{DebDel21}. This family of functions was chosen for optimization purposes: indeed, inequalities of the form~\eqref{eq:DefinitionLyapunovFunctionals} can be rewritten as follows
\[
\begin{aligned}
\prod_{i\in \M}\prod_{b\in S}(f^\star_{v_b}(A_ix))^{p_{a,b,i}}&\leq\rho f^\star_{v_a}(x),\;\;\forall x\in \R^n_{\geq 0}, \;\;\text{iff}\\
\prod_{i\in \M}\prod_{b\in S}(f^\star_{v_b}(A_iv_a))^{p_{a,b,i}}&\leq\rho ,
\end{aligned}
\] 
simplifying the minimization of the parameter $\rho$ (the upper bound for the probabilistic spectral radius), see~\cite[Section 4]{DebDel21} for the details.
In Table~I we collect the values of the obtained upper bounds on $\rho_0(\cA,\cH,\xi_\cH)$.  

Concluding, we underline how, as proven in Theorem~\ref{thm:TLIFT} and Theorem~\ref{thm:PropertiesPatg}, the $2$-steps lift $\cH_2$ and the path lift of degree~$1$, $\cH_1$, provide a better over appoximation of $\rho_0(\cA,\cH,\xi_\cH)$ with respect to the original graph-based conditions imposed on $\cH$, for these $2$ different sets of candidate functions, $\cQ$ and~$\cD$.}
\section{Conclusion}\label{sec:COncl}
In this work, we have generalized stochastic stability notions, and related algorithms, from arbitrarily switching systems to systems whose switching signal is ruled by a Markov Chain (aka MJLS). Inspired by techniques developed for deterministic switched systems, we presented some numerical schemes to provide tight upper bounds for the probabilistic spectral radius. These techniques rely on formal expansions of the underlying stochastic graph, called \emph{lifts}. Future research will investigate the numerical
aspects related with our approximation technique, and the application of the proposed scheme for more general stochastic systems settings.

\bibliography{biblio} 
\bibliographystyle{plain}

\end{document}